\pgfplotsset{width=10cm,compat=1.9}
\definecolor{light_gray}{gray}{0.75}
\definecolor{lighter_gray}{gray}{0.5}
\colorlet{light_blue}{blue!20}
\definecolor{dark_green}{rgb}{0.0, 0.6, 0.0}
\definecolor{royal_blue}{rgb}{0.0, 0.22, 0.66}
\definecolor{salmon}{rgb}{1.0, 0.55, 0.41}
\definecolor{gold}{rgb}{0.8, 0.63, 0.21}
\definecolor{navy_blue}{rgb}{0.0, 0.0, 0.5}
\newcommand{\ep}{\varepsilon}    
\newcommand{\bb}{{\boldsymbol b}} 
\newcommand{\cdr}{\emph{Convection-Diffusion-Reaction}\xspace}
\newcommand{\me}{E\in \mathcal{E}_h}
\newcommand{\bn}{\boldsymbol n} 
\newcommand{\bt}{\boldsymbol t} 
\newcommand{\cF}{{\mathcal F}}
\newcommand{\afce}{{\emph{AFC-energy}}\xspace}
\newcommand{\afcse}{{\emph{AFC-SUPG-energy}}\xspace}
\newcommand{\revi}[1]{{\color{black} #1}}
\newcommand{\blist}{\begin{list}{}{\itemsep0.0ex\parsep0.1ex\topsep0.2ex\leftmargin1.6em\labelwidth1.3em}}
\theoremstyle{plain}
\newtheorem{lemma}{Lemma}
\theoremstyle{plain}
\newtheorem{theorem}[lemma]{Theorem}
\theoremstyle{remark}
\newtheorem{remark}[lemma]{Remark}
\theoremstyle{definition}
\title{A Residual Based A Posteriori Error Estimators for AFC Schemes for Convection-Diffusion Equations}
\author{Abhinav Jha
\footnote{RWTH Aachen University, Applied and Computational Mathematics, Schinkelstra\ss e 2, 52062, Aachen, \texttt{jha@acom.rwth-aachen.de}}}
\date{}
\begin{document}
\maketitle
\begin{abstract}
In this work, we propose a residual-based a posteriori error estimator for algebraic flux-corrected (AFC) schemes for stationary convection-diffusion equations. A global upper bound is derived for the error in the energy norm for a general choice of the limiter, which defines the nonlinear stabilization term. In the diffusion-dominated regime, the estimator has the same convergence properties as the true error. A second approach is discussed, where the upper bound is derived in a posteriori way using the Streamline Upwind Petrov Galerkin (SUPG) estimator proposed in \cite{JN13}. Numerical examples study the effectivity index and the adaptive grid refinement for two limiters in two dimensions.
\\\textbf{Keywords:}
a posteriori estimator, steady-state convection-diffusion equations, algebraic flux correction (AFC) schemes, SUPG finite element method, energy norm
\end{abstract}

\section{Introduction}
In this paper we will study the steady-state \cdr equations given as follows:
\begin{equation}\label{eq:cdr_eqn}
\begin{aligned}
-\varepsilon \Delta u+\boldsymbol{b}\cdot\nabla u+cu&=f&& \mathrm{on}\ \Omega,
\\u&=u_D&& \mathrm{on}\ \Gamma_D,
\\\varepsilon\partial_{\boldsymbol{n}}u&=g&& \mathrm{on}\ \Gamma_N,
\end{aligned}
\end{equation}
where $\varepsilon>0$ is the constant diffusion coefficient, $\bb$ is the convective transport flow with $\nabla \cdot \bb =0$, $c$ is the reaction, $\Omega$ is a polygonal domain in $\mathbb{R}^d,\ d\geq 2$, with Lipschitz boundary $\Gamma$ consisting of two components the Dirichlet boundary, $\Gamma_D$ and the Neumann boundary, $\Gamma_N$, and $u_D$ and $g$ are the Dirichlet and Neumann boundary conditions, respectively. Such equations model the transport of a quantity such as a temperature or concentration. We are interested in the case when convection dominates diffusion as it leads to the formation of layers on the boundary and in the interior of the domain. Hence, one would like a discretization that approximates these layers properly, i.e., they should be sharp and physically consistent, which for \cdr equations means that they satisfy the discrete maximum principle (DMP). In this work, we focus on nonlinear discretizations, namely the algebraic flux correction schemes (AFC) (see \cite{Ku06, Ku07}). The AFC schemes belong to a small class of discretizations that not only compute the layer sharply but also give physically consistent results. The first convergence analysis relating to the AFC schemes has been proposed in \cite{BJK16} using the so-called Kuzmin limiter. The analysis regarding the DMP and convergence of the scheme relies on certain assumptions on the grid. A new definition of the stabilization parameter has been proposed in \cite{BJK17}, called the BJK limiter, which makes the scheme linearity preserving. The first comprehensive study regarding the solvability of the nonlinear problem arising in the AFC scheme has been presented in \cite{JJ18, JJ19} where it has been noted that the nonlinear problem arising for the BJK limiter is more difficult to solve as compared to the Kuzmin limiter.

An approach to approximate the layers properly and reduce the number of unknowns is the use of highly non-equidistant meshes instead of equidistant (or uniform) meshes. One may begin with some uniform mesh, compute a numerical solution on it, and then use information from this to adapt the grid in an a posteriori way, thereby obtaining a grid more suited to the problem. This technique is referred to as \emph{adaptive methods based on a posteriori error estimation}. Modern interest in a  posteriori error estimation for finite element methods (FEMs) for two-point boundary value problems began with the pioneering work of Babu\v{s}ka and Rheinboldt~\cite{BR78}. In the review, \cite{Stynes05} the author prophesizes that adaptive methods will triumph over other methods to solve \cdr equations.

A posteriori error estimation for \emph{Convection-Diffusion-Reaction} equations has received a lot of attention from the past three decades. A review of all the estimators proposed for these equations is beyond this work scope, but some examples of estimators obtained using different techniques can be found in \cite{Ver98, APS05, San08, JN13}. One of the initial studies for the comparison of different estimators using the Streamline Upwind Petrov Galerkin (SUPG) solution of \cdr equations was done in \cite{John00}, and it was shown that none of the estimators was robust with respect to the diffusion coefficient, $\varepsilon$. By robustness, we mean that the equivalence constants between the estimator and the error should be independent of how much convection-dominated the problem is. Work towards deriving a robust estimator was proposed in \cite{Ver05} where the analysis from \cite{Ver98} was extended by adding a dual norm of the convective derivative to the energy norm, but the additional term in the norm can only be approximated. A generalization of the robust estimators was considered in \cite{TV15}, where the analysis was applied to linear stabilized schemes. Robust a posteriori error estimators for $L^1(\Omega)$ and $L^2(\Omega)$ norm of the error can be found in \cite{HGMF06, HFD08, HDF11}. In \cite{JN13} a robust estimator is proposed in the same norm in which the a priori analysis is performed for the SUPG method, namely the SUPG norm. Here the analysis relied on certain hypotheses, including the interpolation of the solution.

One of the drawbacks of all the above-mentioned estimators is the presence of certain constants which can only be approximated. Results related to finding a fully computable upper bound for the error of convection-diffusion equations have gained attention recently and can be found in \cite{AABR13, ESV10}. For the algebraic flux correction schemes (AFC), a fully computable estimator was proposed in \cite{ABR17} with respect to the energy norm. This was the first work where an a posteriori error estimator has been derived for the AFC schemes to the best of our knowledge. It is shown that the estimator is not robust with respect to $\varepsilon$, and also the local efficiency of the scheme relied on certain assumptions, including the Lipschitz continuity of the nonlinear term and the linearity preservation of the scheme.

In this work, we propose a new residual-based estimator for the AFC schemes in the energy norm. Our analysis will consider piecewise linear elements as AFC schemes are restricted to the lowest order element. Results on some concrete choices of constants that appear in certain trace inequalities will be presented. The paper is organized as follows: Sec.~\ref{sec:prelim_post} introduces certain notations, definitions, and auxiliary results that will be used in our a posteriori error analysis. In Sec.~\ref{sec:post_error} a global upper bound and a formal local lower bound are derived for the error in the energy norm. The reason for calling the lower bound a formal lower bound will be made clear later in the paper. Here, we also present another strategy for deriving an upper bound using the SUPG solution. Result obtained with numerical simulations are presented in Sec.~\ref{sec:numres_post} in two dimensions. Main observations include that the proposed residual-based error estimator has, in the diffusion-dominated regime, the same convergence properties as the true error, the actual choice of the limiter plays a minor role in the strategy which uses the SUPG solution, and that the convergence of the AFC scheme with Kuzmin limiter becomes irregular on adaptive grids with red-green refinements (see \cite{Ver13}) once the problem becomes locally diffusion-dominated. Finally, some conclusions and an outlook are given.

\section{Preliminaries}\label{sec:prelim_post}
Throughout this paper we use standard notions for Sobolev spaces and their norms (see \cite{Ada75}). Let $\Omega\subset \mathbb{R}^d$ be a measurable set, then inner product in $L^2(\Omega)$ is denoted by $\left( \cdot, \cdot\right)$. The norm (semi-norm) on $W^{m,p}(\Omega)$ is denoted by $\|\cdot\|_{m,p,\Omega}$ ($|\cdot|_{m,p,\Omega}$), with the convention $\|\cdot\|_{m,\Omega}=\|\cdot\|_{m,2,\Omega}$.

In Eq.~\eqref{eq:cdr_eqn} the Dirichlet part $\Gamma_D$ has a positive $(d-1)$-dimensional Lebesgue measure and $\partial \Omega^-\subset \Gamma_D$, where $\partial \Omega^-$ being the inflow boundary of $\Omega$, i.e., 
$$
\partial \Omega^-=\lbrace x\in \partial \Omega:\ \boldsymbol{b}(x)\cdot \bn(x)<0\rbrace,
$$
where $\bn(x)$ is the outward unit normal. We assume that $\varepsilon\in \mathbb{R}^+,\ \bb\in W^{1,\infty}(\Omega),\ c\in L^{\infty}(\Omega),$ $\revi{f\in L^2(\Omega)}$, and Eq.~\eqref{eq:cdr_eqn} is scaled such that $\|\bb\|_{L^{\infty}(\Omega)}=\mathcal{O}(1)$ and $\|c\|_{L^{\infty}(\Omega)}=\mathcal{O}(1)$. We are interested in the case of convection domination, so we have additional assumption of $0<\varepsilon\ll 1$. 

It is well known that under the assumption
\begin{equation}
\left(c(x)-\frac{1}{2}\nabla \cdot \bb(x)\right)\geq \sigma_0 > 0,
\end{equation}
Eq.~\eqref{eq:cdr_eqn} possesses a unique weak solution $u\in \revi{C(\overline{\Omega})\cap} H_D^1(\Omega)$ that satisfies
\begin{equation}\label{eq:bilinear_form}
a(u,v)=\langle f,v\rangle +\left\langle g,v\right\rangle_{\Gamma_N}\quad \forall v\in H^1_{0,D}(\Omega)
\end{equation}
with 
\begin{equation}\label{eq:bilinear_form_1}
a(u,v)=\varepsilon (\nabla u, \nabla v)+(\boldsymbol{b}\cdot \nabla u, v)+(cu,v),
\end{equation}
$H_D^1(\Omega)=\{v\in H^1(\Omega):\ v|_{\Gamma_D}=u_D\}$, $H_{0,D}^1(\Omega)=\{v\in H^1(\Omega):\ v|_{\Gamma_D}=0\}$, $\langle \cdot, \cdot \rangle$ the duality pairing between $H^1_{0,D}(\Omega)$ and it's dual, \revi{and $\langle \cdot, \cdot \rangle_{\Gamma_N}$ the duality pairing restricted to the Neumann boundary}, e.g. see \cite[Sec.~III.1.1]{RST08}.

The algebraic flux correction scheme for Eq.~\eqref{eq:cdr_eqn} reads as (see \cite{BJK16}): Find $u_h\in W_h(\subseteq C(\overline{\Omega})\cap H^1_D(\Omega))$ such that
\begin{equation}\label{eq:afc_def}
a_{\mathrm{AFC}}(u_h;u_h,v_h)=\langle f,v_h\rangle+\langle g, v_h\rangle_{\Gamma_N}\quad \forall v_h\in V_h\left(\subseteq C(\overline{\Omega})\cap H^1_{0,D}(\Omega)\right),
\end{equation}
with $a_{\mathrm{AFC}}(\cdot,\cdot):H_D^1(\Omega)\times H_{0,D}^1(\Omega)\rightarrow \mathbb{R}$ such that
$$
a_{\mathrm{AFC}}(u_h;u_h,v_h):=a(u_h,v_h)+d_h(u_h;u_h,v_h),
$$
where \revi{$W_h,\ V_h$ are linear finite-dimensional subspaces of $C(\overline{\Omega})\cap H^1_{D}(\Omega)$ and $C(\overline{\Omega})\cap H^1_{0,D}(\Omega)$, respectively, }
\begin{equation}\label{eq:d_h_point_formulation}
d_h(w;u,v)=\sum_{i,j=1}^N\left(1-\alpha_{ij}(w)\right)d_{ij}\left(u(x_j)-u(x_i)\right)v(x_i)\quad \forall u,v,w\in C(\overline{\Omega}),
\end{equation}
$\alpha_{ij}(w)$ are the solution-dependent limiters, $d_{ij}$ is the artificial diffusion matrix \revi{defined by}
$$
\revi{d_{ij}=-\max \lbrace a_{ij}, 0, a_{ji}\rbrace,\ i\neq j,\quad d_{ii}=-\sum_{j=1,j\neq i}^Nd_{ij},}
$$ \revi{$a_{ij}$ the stiffness matrix entries corresponding to Eq.~\eqref{eq:bilinear_form_1},} $N$ the total number of nodes, and $a(u_h,v_h)$ is given by Eq.~\eqref{eq:bilinear_form_1}. For our analysis we will be assuming homogeneous Dirichlet conditions, i.e., $u_D=0$.

In \cite{BJKR18} a different representation of $d_h(\cdot;\cdot,\cdot)$ is given for conforming piecewise linear finite element functions $u$ and $v$, which reads as
\begin{equation}\label{eq:d_h_edge_formulation}
d_h(w;u,v)=\sum_{E\in \mathcal{E}_h}\left(1-\alpha_E(w)\right)|d_E|h_E(\nabla u\cdot \bt_E,\ \nabla v\cdot \bt_E)_E,
\end{equation}
where $\mathcal{E}_h$ is the set of all edges, $\bt_E$ is the tangential unit vector on edge $E$, \revi{and $(\cdot ,\cdot)_E$ is the $L^2$ inner product defined on $E$}. Results regarding the existence and uniqueness (of the linearized system) of the solution can be found in \cite{BJK16}. \revi{We want to note here that by abuse of notation $\alpha_{ij}$ and $\alpha_E$ refer to the same quantities, i.e., the solution-dependent limiters (similarly for $d_{ij}$ and $d_E$). The notation $\alpha_E$ and $d_E$ will be used while referring to the $d_h(\cdot; \cdot, \cdot)$ formulation given by Eq.~\eqref{eq:d_h_edge_formulation}.}

For $u,v,w,u_1,u_2\in C(\overline{\Omega})$ we have the following properties of $d_h(\cdot;\cdot,\cdot )$ (see \cite{BJK16}), 
\begin{enumerate}
\item \emph{Non-negativity}: $0\leq d_h(w;v,v)$.
\item \emph{Linearity}: 
\begin{equation}\label{linearity}
\begin{aligned}
d_h(w;u_1+u_2,v)&=d_h(w;u_1,v)+d_h(w;u_2,v),
\\d_h(w;v,u_1+u_2)&=d_h(w;v,u_1)+d_h(w;v,u_2).
\end{aligned}
\end{equation}
\item \emph{Semi-Norm property, Cauchy-Schwarz inequality}:

\begin{equation}\label{seminorm_prop_d_h}
d_h(w;u,v)\leq d_h^{1/2}(w;u,u)d_h^{1/2}(w;v,v).
\end{equation}
\end{enumerate}

Our a posteriori error estimator will be derived with respect to the energy norm,
\begin{equation}\label{eq:energy_norm}
\|v\|_a^2=\varepsilon |v|^2_{1,\Omega}+\sigma_0 \|v\|_{0,\Omega}^2\ \ \ \forall v\in H^1(\Omega).
\end{equation}

We would also like to mention the induced AFC norm of the system, which is used for its a priori analysis (\cite{BJK16, BJK17}) and which is the starting point of our a posteriori analysis,
\begin{equation}\label{eq:afc_norm}
\|u\|_{\mathrm{AFC}}^2=\|u\|_a^2+d_h(u_h,u,u)\ \ \ \forall u\in H^1(\Omega).
\end{equation}

Let $\lbrace \mathcal{T}_h\rbrace\ (h>0)$ be a family of triangulations consisting of simplices that 
partitions $\Omega$. It will be assumed that the partitions are admissible, i.e.,
any two mesh cells are either disjoint, or share a complete $m$ face, $0\leq m\leq d-1$.
Next, we assume its shape regularity, i.e., there exists a constant $C_{\mathrm{shrg}} > 0$ 
such that for each mesh cell $K\in \mathcal{T}_h$ holds 
\begin{equation}\label{eq:shape_regu_00}
\rho_K \ge C_{\mathrm{shrg}} h_K,
\end{equation}
where $h_K$ and $\rho_K$ denote the diameter of $K$ and the diameter of the largest ball inside $K$, respectively. The characteristic parameter of the triangulation is given by $h=\mathrm{max}_{K\in \mathcal{T}_h}$. We use $|K|$ as a symbol for the volume of a mesh cell $K$. The boundary $\partial K$ of $K$ consists of $m$-dimensional linear manifolds, $0\leq m\leq d-1$, called $m$-faces. The $0$-faces are the vertices of the mesh cell, the $1$-faces are the edges, and the $(d-1)$-faces are called facets or faces. The set of all edges is denoted by $\mathcal E_h$ and the edges of a mesh cell $K$ by $\mathcal E_h(K)$. The set of all faces is denoted by $\cF_h=\cF_{h,\Omega}\cup\cF_{h,D}\cup\cF_{h,N}$, where $\cF_{h,\Omega}$, $\cF_{h,D}$, and $\cF_{h,N}$ denote the interior, Dirichlet, and Neumann faces, respectively. In 2d, it holds that $\mathcal E_h = \cF_h$. The set of mesh cells having a common face $F$ is denoted by $\omega_F=\cup_{F\subset \partial K'}K'$ 
and $\omega_K$ denotes the patch of mesh cells that have a joint face with $K$. 

\begin{remark}[Consequences of the shape regularity assumption \eqref{eq:shape_regu_00}]
We will only discuss the 2d case here, but the result can be extended to 3d. 

Denote the edges of an arbitrary triangle $K$ by $E_1$, $E_2$, and $E_3$,
the angle opposite the edge $E_i$ by $\theta_i$, and the length of $E_i$ by $h_{E_i}$,
$i=1,2,3$. Then, the diameter of the largest ball inside $K$ can be computed by 
$$
\rho_K = \frac{2|K|}{h_{E_1}+h_{E_2}+h_{E_3}}.
$$
Hence, for a given triangulation, one can compute $\rho_K/h_K$ for each mesh cell, such that one gets information on the constant $C_{\mathrm{shrg}}$. Likewise, it is 
$$
\rho_K = \frac{h_{E_1}}{\cot{\frac{\theta_2}2} + \cot{\frac{\theta_3}2}}
$$
and similarly for the other edges. Since $\theta_2>0$, $\theta_3 > 0$, and $\theta_2 + \theta_3 < \pi$, one can check that 
the denominator is larger than $2$ such that $\rho_K < h_{E_1}$ and similarly for the 
two other edges:
\begin{equation}\label{eq:shape_regu}
h_{E_i} > \rho_K, \quad h_{E_i} \ge C_{\mathrm{shrg}} h_K, \quad i=1,2,3.
\end{equation}
In 2d, the shape regularity condition \eqref{eq:shape_regu_00} is 
equivalent with the minimal angle condition, i.e., there is a minimal angle $\theta_0>0$ 
for all triangles and all triangulations from the family of triangulations (see \cite[Pg.~130,~3.1.3]{Cia78}). The minimal angle condition implies a maximal angle condition. Altogether, there is 
a positive constant say, $C_{\mathrm{cos}} < 1$ such that for all $\mathcal T_h$ and all $K \in \mathcal T_h$
\begin{equation}\label{eq:cosine_est}
\cos(\theta_i) \le C_{\mathrm{cos}} \quad i=1,2,3.
\end{equation}
For a given triangulation, $C_{\mathrm{cos}}$ can be 
computed. \revi{This remark would be used later in the computation of certain constants related to the estimate of the trace on the edge cell.}
\end{remark}

\revi{\subsection{Limiters}
To end the preliminaries, we mention the limiters used in the discussion and the numerical simulations.
\subsubsection{Kuzmin Limiter}
This limiter has been proposed in \cite{Ku07}. It is applicable to $\mathbb{P}_1$ and $\mathbb{Q}_1$ elements. The existence and uniqueness (of the linearized) of the solution have been proposed in \cite{BJK16}. The limiters are computed as follows:
\begin{enumerate}
    \item Compute 
    \begin{eqnarray*}
       P_i^+ & = & \sum_{j=1,a_{ji}\leq a_{ij}}^N \max\left\lbrace d_{ij}(u_j-u_i), 0\right\rbrace, \nonumber \\
       P_i^- & = & \sum_{j=1,a_{ij}\leq a_{ji}}^N \min\left\lbrace d_{ij}(u_j-u_i), 0\right\rbrace.\nonumber
    \end{eqnarray*}
    \item Compute 
       \begin{equation*}
       Q_i^+ = \sum_{j=1}^N \min\left\lbrace d_{ij}(u_j-u_i), 0\right\rbrace, \qquad Q_i^- = -\sum_{j=1}^N \max\left\lbrace d_{ij}(u_j-u_i), 0\right\rbrace.
    \end{equation*}
    \item Compute
    \begin{equation*}
       R_i^+ = \min\left\{ 1,\frac{Q_i^+}{P_i^+} \right\}, \qquad R_i^+ = \min\left\{ 1,\frac{Q_i^-}{P_i^-} \right\},\qquad i=1,\dots, M,
    \end{equation*}
    where $M$ are the number of non-Dirichlet degrees of freedoms. If the \(P_i^+\) or \(P_i^-\) is zero, we set \(R_i^+=1\) or \(R_i^-=1\), respectively.  \(R_i^+\) and \(R_i^-\) are set to $1$ for Dirichlet nodes as well. 
    \item Compute
    \[ \alpha_{ij} = \begin{cases} 
        R_i^+ & \mbox{ if } d_{ij}(u_j-u_i)>0,\\
        1 & \mbox{ if } d_{ij}(u_j-u_i)=0,\\
        R_i^- & \mbox{ if } d_{ij}(u_j-u_i)<0,
      \end{cases}
   \]
\end{enumerate}
for $i,j=1,\dots,N$.

\subsubsection{BJK Limiter}
This limiter has been proposed in \cite{BJK17} which makes the AFC scheme linearity preserving. This limiter is only applicable to $\mathbb{P}_1$ elements. For a detailed review of the limiter, we refer to \cite{BJK17}. The limiters are computed as follows:
\begin{enumerate}
    \item Compute 
    \begin{eqnarray*}
       P_i^+ & = &\sum_{j\in N_i\cup \{i\}}^N \max\left\lbrace d_{ij}(u_j-u_i), 0\right\rbrace,\nonumber \\
       P_i^- & = &\sum_{j\in N_i\cup \{i\}}^N \min\left\lbrace d_{ij}(u_j-u_i), 0\right\rbrace,\nonumber
    \end{eqnarray*}
    where $N_i$ is the set of nodes for which there is an entry in the stiffness matrix's sparsity pattern, i.e., $N_i$ is the set of all neighbor degrees of freedom of $x_i$ including $x_i$.
    \item Compute 
       \begin{equation*}
       Q_i^+ = q_i\left(u_i-u_i^{\max}\right), \qquad Q_i^- = q_i\left(u_i-u_i^{\min}\right),
    \end{equation*}
    where 
    \begin{eqnarray*}
    u_i^{\max}  & = & \max_{j\in N_i}u_j,\nonumber \\
    u_i^{\min}  & = &\min_{j\in N_i}u_j,\nonumber \\
    q_i & = &\sum_{j\in N_i}\gamma_i d_{ij},\nonumber
    \end{eqnarray*}
    and $\gamma_i$ is a positive constant computed for interior nodes as given in \cite[Rem.~6.2]{BJK17}.
    \item Compute
    \begin{equation*}
       R_i^+ = \min\left\{ 1,\frac{Q_i^+}{P_i^+} \right\}, \qquad R_i^+ = \min\left\{ 1,\frac{Q_i^-}{P_i^-} \right\},\quad i=1,\dots, M.
    \end{equation*}
If the \(P_i^+\) or \(P_i^-\) is zero, we set \(R_i^+=1\) or \(R_i^-=1\), respectively. \(R_i^+\) and \(R_i^-\) are set to $1$ for Dirichlet nodes as well. 
    \item Compute
    \[ \overline{\alpha}_{ij} = \begin{cases} 
        R_i^+ & \mbox{ if } d_{ij}(u_j-u_i)>0,\\
        1  & \mbox{ if } d_{ij}(u_j-u_i)=0, \\
        R_i^- & \mbox{ if } d_{ij}(u_j-u_i)<0,
      \end{cases}\qquad i=1,\dots, M,\ j=1,\dots, N.
   \]
\end{enumerate}
Finally, one sets
\begin{eqnarray*}
\alpha_{ij} = \min \left\lbrace\overline{\alpha}_{ij}, \overline{\alpha}_{jj}\right\rbrace,\qquad && i,j=1,\dots,M, \nonumber \\
\alpha_{ij} =  \overline{\alpha}_{ij},\qquad && i=1,\dots,M,\ j=M+1,\dots, N. \nonumber
\end{eqnarray*}}

\subsection{Auxiliary Results}
In this subsection, we would mention certain standard results used for a posteriori error estimation. We would also give some concrete choices of constants in certain trace results. We will assume that the triangulations are regular.

\begin{lemma} \emph{(\textbf{Inverse estimate}) (\cite[Lemma~4.5.3]{BS08})}
Let $C_{\mathrm{shrg}} h\leq h_K\leq h$, where $0<h\leq 1$, and $\mathcal{P}_h$ be a polynomial subspace of $H^m(K)$. Then for $0\leq l\leq m$ there exists a constant $C_{\mathrm{inv}}$ such that for all $v\in \mathcal{P}_h$ and $K\in \mathcal{T}_h$, we have
\begin{equation}\label{eq:inv_est}
\| v_h\|_{m,K}\leq C_{\mathrm{inv}}h_K^{l-m}\|v_h\|_{l,K}.
\end{equation}
\end{lemma}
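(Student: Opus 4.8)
The plan is to run the standard affine-scaling argument: transport the inequality to a fixed reference simplex $\widehat K$, where it reduces to equivalence of norms on a finite-dimensional space of polynomials, and then scale back while carefully bookkeeping the powers of $h_K$ produced by the Jacobian of the transformation.

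First I would fix the reference simplex $\widehat K$ and, for a mesh cell $K \in \mathcal T_h$, the affine map $F_K(\widehat x) = B_K \widehat x + b_K$ with $F_K(\widehat K) = K$. The shape-regularity assumption \eqref{eq:shape_regu_00} yields the usual bounds $\|B_K\| \le C h_K$, $\|B_K^{-1}\| \le C \rho_K^{-1} \le C h_K^{-1}$, and $|\det B_K| \simeq |K| \simeq h_K^{d}$, with constants depending only on $C_{\mathrm{shrg}}$ and $d$. For $v \in \mathcal P_h$ set $\widehat v = v \circ F_K$; by the chain rule $\widehat v$ is a polynomial on $\widehat K$ of the same degree, and the classical change-of-variables estimates give, for every integer $k \ge 0$,
\begin{equation*}
|v|_{k,K} \le C\,\|B_K^{-1}\|^{k}\,|\det B_K|^{1/2}\,|\widehat v|_{k,\widehat K},
\qquad
|\widehat v|_{k,\widehat K} \le C\,\|B_K\|^{k}\,|\det B_K|^{-1/2}\,|v|_{k,K}.
\end{equation*}

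Next I would establish the estimate for seminorms on the reference element. Fix $0 \le l \le m$. The space $\widehat{\mathcal P}$ of pullbacks on $\widehat K$ is finite-dimensional; the seminorm $|\cdot|_{l,\widehat K}$ vanishes exactly on polynomials of degree $<l$, while $|\cdot|_{m,\widehat K}$ vanishes on polynomials of degree $<m$, hence in particular on that same kernel since $m \ge l$. Therefore, on the quotient of $\widehat{\mathcal P}$ by that kernel, $|\cdot|_{l,\widehat K}$ is a genuine norm and $|\cdot|_{m,\widehat K}$ a seminorm, so by a short compactness argument (equivalence of norms on a finite-dimensional space) there is $\widehat C$, depending only on $\widehat K$ and the polynomial degree, with $|\widehat v|_{m,\widehat K} \le \widehat C\,|\widehat v|_{l,\widehat K}$ for all $\widehat v \in \widehat{\mathcal P}$. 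Chaining this with the two displayed transformation estimates and inserting the shape-regularity bounds (the $|\det B_K|$ factors cancel) gives
\begin{equation*}
|v|_{m,K} \le C\,\|B_K^{-1}\|^{m}\,\|B_K\|^{l}\,|v|_{l,K} \le C_{\mathrm{inv}}'\,h_K^{\,l-m}\,|v|_{l,K}.
\end{equation*}

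Finally I would assemble the full-norm bound: write $\|v\|_{m,K}^2 = \sum_{k=0}^{m} |v|_{k,K}^2$, bound the terms with $k \le l$ trivially by $\|v\|_{l,K}^2$, and bound each term with $l < k \le m$ by $C\,h_K^{2(l-k)}\|v\|_{l,K}^2$ using the seminorm estimate just proved. Since $0 < h_K \le h \le 1$ and $l-k < 0$ in this range, each factor $h_K^{2(l-k)} \ge 1$ and is dominated by $h_K^{2(l-m)}$; summing the finitely many terms and absorbing constants yields $\|v\|_{m,K} \le C_{\mathrm{inv}}\,h_K^{\,l-m}\,\|v\|_{l,K}$ with $C_{\mathrm{inv}}$ depending only on $C_{\mathrm{shrg}}$, $d$, the polynomial degree, and $m$. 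The point requiring the most care — the ``main obstacle'' — is the reference-element step: it is \emph{not} literally ``all norms are equivalent'', because $|\cdot|_{l,\widehat K}$ is only a seminorm, and the quotient-space reformulation above is what makes it rigorous; one must also keep the hypothesis $h \le 1$ in play in order to discard the lower-order terms in the concluding summation.
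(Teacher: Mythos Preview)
The paper does not supply its own proof of this lemma: it is stated with the citation \cite[Lemma~4.5.3]{BS08} and immediately followed by the next result, so there is nothing in the paper to compare your argument against. Your proposal is the standard affine-scaling proof found in the cited reference (Brenner--Scott) and in Ciarlet: pull back to the reference simplex, use finite-dimensionality to bound the higher-order seminorm by the lower-order one there, and scale back via the Jacobian estimates furnished by shape regularity. The quotient-space justification you give for the reference-element step is correct and is exactly the care that step needs; the final assembly using $h_K\le 1$ to absorb the lower-order contributions is also fine. In short, your proof is correct and is essentially the textbook argument the paper is invoking by citation.
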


\begin{theorem} \emph{(\textbf{Interpolation estimate}) (\cite[Corollary~4.8.15]{BS08})} Let $q\in [1,\infty]$ and $s\leq t\leq 1$. Let, $I_h:W^{t,q}(\Omega)\rightarrow V_h$ denote a bounded linear interpolation operator. Then, it satisfies $\forall v\in W^{t,q}(\Omega)$ and all mesh cells $K\in \mathcal{T}_h$
\begin{equation}\label{eq:interpolation_estimate}
\revi{\left(\sum_{K\in \mathcal{T}_h}\|v-I_hv\|_{s,q,K}^q \right)^{1/q}\leq C_Ih^{t-s}|v|_{t,q,\Omega}}
\end{equation}
\end{theorem}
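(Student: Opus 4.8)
The plan is to follow the classical Bramble--Hilbert-plus-scaling argument and then sum over the mesh, exploiting the finite overlap of local patches guaranteed by shape regularity. The key point is that, since $t\le 1$ and $d\ge 2$, the operator $I_h$ must be a quasi-interpolation (averaging) operator, so that on a cell $K$ the restriction $I_hv|_K$ depends only on $v$ restricted to a fixed patch $\widetilde\omega_K$ of mesh cells surrounding $K$; the two properties we exploit are that $I_h$ reproduces constants and that it is locally stable, $\|I_hv\|_{s,q,K}\le C\,\|v\|_{t,q,\widetilde\omega_K}$, the latter being exactly what ``bounded'' means here.

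First I would prove the local estimate
$$
\|v-I_hv\|_{s,q,K}\ \le\ C\,h_K^{\,t-s}\,|v|_{t,q,\widetilde\omega_K}.
$$
Because $I_h$ reproduces constants, $v-I_hv=(v-c)-I_h(v-c)$ on $K$ for every constant $c$, hence, by the triangle inequality together with local stability, $\|v-I_hv\|_{s,q,K}\le C\,\|v-c\|_{t,q,\widetilde\omega_K}$. Mapping $\widetilde\omega_K$ affinely onto a reference patch $\widehat\omega$ of unit diameter, choosing $c$ on $\widehat\omega$ according to the Deny--Lions lemma (the generalized Poincar\'e inequality $\inf_{c}\|\widehat v-c\|_{t,q,\widehat\omega}\le C\,|\widehat v|_{t,q,\widehat\omega}$, valid also for fractional $t$), and mapping back, the Jacobian factors from the change of variables combine with the scaling of the (possibly fractional) Sobolev norm and seminorm to produce precisely the power $h_K^{t-s}$; shape regularity \eqref{eq:shape_regu_00} keeps $h_K/\rho_K$, and hence the distortion constants of the affine map, uniformly bounded, and also bounds the number of possible shapes of $\widehat\omega$. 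For $q=\infty$ the same steps go through with sup-norms.

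Then I would assemble the global bound. Raising the local estimate to the power $q$, summing over $K\in\mathcal T_h$, and using $h_K\le h$ with $t-s\ge0$ gives
$$
\sum_{K\in\mathcal T_h}\|v-I_hv\|_{s,q,K}^q\ \le\ C\,h^{(t-s)q}\sum_{K\in\mathcal T_h}|v|_{t,q,\widetilde\omega_K}^q .
$$
Since every cell lies in only a bounded number of patches $\widetilde\omega_{K'}$---a number controlled by $C_{\mathrm{shrg}}$ alone---the last sum is bounded by $C\,|v|_{t,q,\Omega}^q$; for fractional $t$ this uses that the Gagliardo double integral over $\Omega\times\Omega$ dominates the sum of the double integrals over the sets $\widetilde\omega_K\times\widetilde\omega_K$ up to the overlap constant. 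Taking $q$-th roots yields \eqref{eq:interpolation_estimate}, and the case $q=\infty$ follows by replacing all sums by maxima.

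The main obstacle is the bookkeeping for fractional-order spaces: one must ensure that the Deny--Lions/Bramble--Hilbert lemma is available in the fractional Sobolev setting on the reference patch, that the Gagliardo seminorm scales under an affine map with the same power of $h_K$ as an integer-order seminorm would, and that the bounded-overlap summation remains valid despite the nonlocality of the fractional seminorm. Once these three points are settled, the rest is the routine scaling-and-summation pattern of integer-order interpolation theory.
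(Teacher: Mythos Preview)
The paper does not prove this theorem; it is quoted verbatim as a known result from the literature (Brenner--Scott, Corollary~4.8.15) and used as a black box in the subsequent a~posteriori analysis. There is therefore nothing to compare your proposal against in the paper itself.

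That said, your sketch is the standard textbook route---local stability plus polynomial reproduction of the quasi-interpolant, Deny--Lions/Bramble--Hilbert on a reference patch, affine scaling, and bounded-overlap summation---and is essentially the argument one finds in Brenner--Scott or Scott--Zhang. The caveats you flag (fractional-order Bramble--Hilbert, scaling of the Gagliardo seminorm, overlap control for the nonlocal seminorm) are the right ones, and they are handled in the cited references. One minor point: your reduction uses only that $I_h$ reproduces constants, which suffices for $t\le 1$; the general Scott--Zhang result reproduces all of $V_h$ and is what the paper actually needs later (Remark~\ref{rem:interpoaltion_assumption}), but for the estimate as stated your version is enough.
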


\begin{remark}\label{rem:interpoaltion_assumption}
For the analysis, we need a stable quasi-interpolation operator, which is identity on the finite element space, i.e.,
$$
I_hu_h=u_h\quad \forall\ u_h\in V_h.
$$
\revi{One candidate for such an interpolation is the Scott-Zhang interpolation operator, (see \cite{SZ90}) which will be used in this paper}. It is important to note that $I_h$ cannot be the nodal interpolation operator as it is not $L^2$-stable and $L^2$-stability is required further in the proof.
\end{remark}

\begin{remark}
For $s=t$ in Eq.~\eqref{eq:interpolation_estimate}, one gets with $u_h = I_hu_h$
\begin{eqnarray}\label{eq:stab_inter}
\sum_{K\in T_h}\|u-I_hu\|_{s,q,K}^q &\le& \sum_{K\in T_h} \left(\|u-u_h\|_{s,q,K}^q + \|I_hu - I_hu_h\|_{s,q,K}^q\right) \nonumber \\
&=& (1+C_I)  \|u-u_h\|_{s,q,\Omega}^q.
\end{eqnarray}
\end{remark}

A trace inequality which relates the $L^2(F)$ norm on a face of a mesh cell $K$
to norms defined on $K$ was proved in \cite{Ver98}.

\begin{lemma} \emph{(\cite[Lemma~3.1]{Ver98})}
Let $v\in H^1(K)$ and $F \subset  \partial K$ with diameter $h_F$, then it holds
\begin{equation}\label{eq:trace_inequality_F}
\|v\|_{0,F}\le C \left(h_F^{-1/2}\|v\|_{0,K}+\|v\|_{0,K}^{1/2}\|\nabla v\|_{0,K}^{1/2}\right).
\end{equation}
\end{lemma}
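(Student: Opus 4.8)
The plan is to establish \eqref{eq:trace_inequality_F} by a Gauss--Green (divergence theorem) argument on the simplex $K$, which is essentially the proof of \cite[Lemma~3.1]{Ver98} but carried out so that the constant is made explicit in terms of the space dimension $d$ and the shape-regularity constant $C_{\mathrm{shrg}}$ of \eqref{eq:shape_regu_00}. I would first prove the bound for $v\in C^\infty(\overline K)$ and then pass to general $v\in H^1(K)$ by a density argument.

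The key choice is the vector field. Let $x_F$ be the vertex of $K$ opposite the face $F$ and set $\sigma(x)=x-x_F$, so that $\nabla\cdot\sigma\equiv d$ and $|\sigma(x)|\le h_K$ on $\overline K$. On $F$ the outer unit normal $\bn$ is constant and $\sigma\cdot\bn$ equals the height $h_{K,F}=d\,|K|/|F|$ of $K$ over $F$, which is constant on $F$; on every other face $F'$ of $K$ one has $x_F\in F'$, hence $x-x_F$ is tangential to $F'$ and $\sigma\cdot\bn=0$ there. Therefore $\int_{\partial K}v^2\,\sigma\cdot\bn\,ds=h_{K,F}\int_F v^2\,ds$. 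Applying the divergence theorem to $v^2\sigma$ for smooth $v$ and using $|\sigma|\le h_K$ together with the Cauchy--Schwarz inequality gives
\begin{equation*}
h_{K,F}\int_F v^2\,ds=\int_K\bigl(d\,v^2+2\,v\,\nabla v\cdot\sigma\bigr)\,dx\le d\,\|v\|_{0,K}^2+2h_K\,\|v\|_{0,K}\|\nabla v\|_{0,K}.
\end{equation*}

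Shape regularity now enters through $h_{K,F}\ge\rho_K\ge C_{\mathrm{shrg}}h_K\ge C_{\mathrm{shrg}}h_F$ (the last step since $h_F\le h_K$) and $h_K/h_{K,F}\le 1/C_{\mathrm{shrg}}$. Dividing the displayed estimate by $h_{K,F}$ yields
\begin{equation*}
\|v\|_{0,F}^2\le\frac{d}{C_{\mathrm{shrg}}}\,h_F^{-1}\|v\|_{0,K}^2+\frac{2}{C_{\mathrm{shrg}}}\,\|v\|_{0,K}\|\nabla v\|_{0,K},
\end{equation*}
and $\sqrt{a+b}\le\sqrt a+\sqrt b$ for $a,b\ge 0$ gives \eqref{eq:trace_inequality_F} with the explicit constant $C=\sqrt{\max\{d,2\}/C_{\mathrm{shrg}}}$. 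Finally, for general $v\in H^1(K)$ one chooses $v_n\in C^\infty(\overline K)$ with $v_n\to v$ in $H^1(K)$; the right-hand side converges, and by continuity of the trace operator $H^1(K)\to L^2(\partial K)$ so does the left-hand side, so the inequality persists in the limit.

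The argument is routine; the only points needing care are the vanishing of $\sigma\cdot\bn$ on all faces other than $F$ (so that no uncontrolled boundary contributions survive) and the passage from the height $h_{K,F}$ to the face diameter $h_F$ — this is precisely where shape regularity is indispensable, since for degenerating (flat) simplices $h_{K,F}$ can be arbitrarily small relative to $h_F$ and the estimate would fail. The density step is standard and relies only on the already-known $L^2$-boundedness of the trace on $H^1(K)$.
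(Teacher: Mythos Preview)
Your proof is correct. Note, however, that the paper does not actually give its own proof of this lemma: it is stated with a citation to \cite[Lemma~3.1]{Ver98} and no argument is supplied. Your divergence-theorem argument with the vector field $\sigma(x)=x-x_F$ is precisely the standard proof one finds in Verf\"urth's work, and you have additionally made the constant explicit in terms of $d$ and $C_{\mathrm{shrg}}$, which goes slightly beyond what the paper records. The geometric facts you use (that $\sigma\cdot\bn$ vanishes on the faces containing $x_F$, that it equals the height $h_{K,F}=d|K|/|F|$ on $F$, and that $h_{K,F}\ge\rho_K$) are all correct, as is the density step.
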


\begin{lemma} Let E be an edge with length $h_E$ and $v$ be a linear function on E, then
\begin{equation}\label{eq:lemma_1}
\|\nabla v\cdot \bt_E\|_{0,E}^2\leq \|\nabla v\|_{0,E}^2.
\end{equation}
\end{lemma}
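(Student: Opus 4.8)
The plan is to observe that the claimed inequality is purely pointwise, with the integration over $E$ playing no essential role. Here $v$ restricted to $E$ is understood as the restriction of an affine function on the adjacent mesh cell, so $\nabla v$ denotes the full (and, since $v$ is affine, constant) gradient vector in $\mathbb{R}^d$, and $\nabla v\cdot\bt_E$ is its tangential derivative along $E$, with $\bt_E$ a \emph{unit} vector. At any point of $E$ I would split $\nabla v$ into its component along $\bt_E$ and the orthogonal remainder, $\nabla v = (\nabla v\cdot\bt_E)\,\bt_E + \boldsymbol{w}$ with $\boldsymbol{w}\cdot\bt_E = 0$. Since $|\bt_E| = 1$, the Pythagorean identity gives $|\nabla v|^2 = (\nabla v\cdot\bt_E)^2 + |\boldsymbol{w}|^2 \ge (\nabla v\cdot\bt_E)^2$ at every point of $E$.

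Integrating this pointwise bound over $E$ with respect to the one-dimensional Lebesgue measure yields $\int_E (\nabla v\cdot\bt_E)^2 \le \int_E |\nabla v|^2$, which is precisely $\|\nabla v\cdot\bt_E\|_{0,E}^2 \le \|\nabla v\|_{0,E}^2$. An equivalent one-line derivation is to apply the Cauchy--Schwarz inequality in $\mathbb{R}^d$ pointwise, $|\nabla v\cdot\bt_E| \le |\nabla v|\,|\bt_E| = |\nabla v|$, and then square and integrate. There is no real obstacle in this lemma; the only points to keep straight are the interpretation of $\nabla v$ as the ambient gradient and the fact that $\bt_E$ has unit length, so that no factor of $h_E$ or edge-length-dependent constant enters.
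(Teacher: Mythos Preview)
Your proof is correct and is essentially the same as the paper's: the paper also uses the pointwise Cauchy--Schwarz inequality together with $\|\bt_E\|_{\infty,E}=1$ to obtain the bound in one line. Your additional orthogonal-decomposition argument is a harmless elaboration of the same idea.
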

\begin{proof}
We know that $\|\bt_E\|_{\revi{\infty},E}=1$. Hence, using this, we get
$$
\|\nabla v\cdot \bt_E\|_{0,E}^2\leq \|\nabla v\|_{0,E}^2\|\bt_E\|_{\revi{\infty},E}^2=\|\nabla v\|_{0,E}^2.
$$
\end{proof}

\begin{lemma}[Estimate of the trace on an edge by the norm on the mesh cell] Let $K \in \mathcal T$ be a 
mesh cell, $\mathcal{E}_h(K)$ the set of all edges of $K$, and $\varphi_h \in \mathbb{P}_1(K)$ be a nodal functional. Then, there exist a constant $C_{\mathrm{edge}}$ independent of $K$ such that
\begin{equation}\label{eq:trace_inequality_E}
\sum_{E\in \mathcal E_h(K)} \|\nabla \varphi_h\cdot \bt_E\|_{0,E}^2 \le C_{\mathrm{edge}}
h_K^{1-d} \|\nabla \varphi_h\|_{0,K}^2.
\end{equation}
\end{lemma}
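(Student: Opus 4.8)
The plan is to reduce the edge trace bound to a combination of an explicit computation of $\|\nabla\varphi_h\cdot\bt_E\|_{0,E}^2$ for a linear function, followed by an equivalence of norms on the reference element that is transported to $K$ by the shape regularity. Since $\varphi_h\in\mathbb{P}_1(K)$, its gradient $\nabla\varphi_h$ is a constant vector on $K$, say $\bg$. Then $\nabla\varphi_h\cdot\bt_E$ is the constant $\bg\cdot\bt_E$ on each edge $E$, so $\|\nabla\varphi_h\cdot\bt_E\|_{0,E}^2 = h_E\,(\bg\cdot\bt_E)^2$, and likewise $\|\nabla\varphi_h\|_{0,K}^2 = |K|\,|\bg|^2$. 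Hence \eqref{eq:trace_inequality_E} is equivalent to the purely linear-algebraic statement
\begin{equation*}
\sum_{E\in\mathcal E_h(K)} h_E\,(\bg\cdot\bt_E)^2 \;\le\; C_{\mathrm{edge}}\,h_K^{1-d}\,|K|\,|\bg|^2
\qquad\text{for all }\bg\in\mathbb R^d.
\end{equation*}

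First I would bound $h_E\le h_K$ for every edge $E$ of $K$ (the diameter of $K$ dominates each edge length) and use $(\bg\cdot\bt_E)^2\le|\bg|^2$ since $|\bt_E|=1$ — this is exactly Lemma \eqref{eq:lemma_1} applied edgewise. That gives $\sum_{E\in\mathcal E_h(K)} h_E\,(\bg\cdot\bt_E)^2 \le (\#\mathcal E_h(K))\,h_K\,|\bg|^2$, where the number of edges is a fixed combinatorial constant ($3$ if $d=2$, $6$ if $d=3$). It then remains to absorb the factor $h_K$ and to produce the correct power $h_K^{1-d}$ together with the volume $|K|$ on the right-hand side. For this I would invoke shape regularity in the form $\rho_K\ge C_{\mathrm{shrg}}h_K$ together with the elementary volume bound $|K|\ge c_d\,\rho_K^d\ge c_d\,C_{\mathrm{shrg}}^d\,h_K^d$, i.e. $h_K^d\le C\,|K|$; equivalently $h_K = h_K\cdot h_K^{1-d}\cdot h_K^{d-1}$ and $h_K^{d-1}\le C\,|K|\,h_K^{-1}$... more cleanly, $h_K \le (C\,|K|)^{1/d}$ is weaker than what is wanted, so the right manipulation is $h_K = h_K^{1-d}\cdot h_K^{d} \le C\,h_K^{1-d}\,|K|$ using $h_K^d\le C|K|$. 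Substituting yields $\sum_E h_E(\bg\cdot\bt_E)^2 \le (\#\mathcal E_h(K))\,C\,h_K^{1-d}\,|K|\,|\bg|^2$, which is the claim with $C_{\mathrm{edge}}$ depending only on $d$ and $C_{\mathrm{shrg}}$ (hence, by the Remark, computable for a given family of triangulations).

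The main obstacle — really the only subtlety — is getting the powers of $h_K$ and the volume factor to match exactly as stated, since the naive edgewise estimate produces $h_K\,|\bg|^2$ whereas the target has $h_K^{1-d}|K|\,|\bg|^2$; the reconciliation is precisely the shape-regularity consequence $h_K^{d}\le C(C_{\mathrm{shrg}},d)\,|K|$, equivalently $1\le C\,h_K^{-d}|K|$, multiplied through by $h_K$. One should double-check that this volume lower bound is genuinely a consequence of \eqref{eq:shape_regu_00} and not an independent assumption: it follows because the inscribed ball of radius $\rho_K/2$ lies in $K$, so $|K|\ge \omega_d(\rho_K/2)^d\ge \omega_d(C_{\mathrm{shrg}}h_K/2)^d$ with $\omega_d$ the volume of the unit ball. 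In $d=2$ one can be fully explicit using the formulas in the Remark, $\rho_K = 2|K|/(h_{E_1}+h_{E_2}+h_{E_3})$ and $h_{E_i}\le h_K$, to get $|K| = \tfrac12\rho_K(h_{E_1}+h_{E_2}+h_{E_3})$ and thereby a closed-form $C_{\mathrm{edge}}$; I would present the general-$d$ argument as above and remark that the 2d constant can be written explicitly (as in the commented-out display), which is the level of detail the rest of the paper uses.
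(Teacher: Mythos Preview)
Your argument is correct and considerably more direct than the paper's. You exploit that $\nabla\varphi_h$ is a constant vector $\bg$ on $K$, reduce the inequality to $\sum_E h_E(\bg\cdot\bt_E)^2 \le C\,h_K^{1-d}|K|\,|\bg|^2$, bound the left side crudely by $(\#\mathcal E_h(K))\,h_K\,|\bg|^2$ via $h_E\le h_K$ and $|\bt_E|=1$, and then use the shape-regularity consequence $h_K^d\le C(C_{\mathrm{shrg}},d)\,|K|$ to convert $h_K$ into $h_K^{1-d}|K|$. This works uniformly in $d$ without any reference-element computation.

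The paper instead maps to the reference simplex $\hat K$: it computes $\|\nabla\hat\varphi_h\cdot\bt_{\hat E}\|_{0,\hat E}^2$ explicitly on each reference edge, sums and bounds by $2(1+\sqrt2)\|\nabla\hat\varphi_h\|_{0,\hat K}^2$, and then transports back via the standard scaling $\|\nabla\hat\varphi_h\|_{0,\hat K}^2\le Ch_K^{2-d}\|\nabla\varphi_h\|_{0,K}^2$ and $h_E^{-1}\le C_{\mathrm{shrg}}^{-1}h_K^{-1}$. The payoff of the paper's longer route is an explicit, nearly closed-form constant (and the subsequent Remark sharpens it further to $4\sqrt2(1+\sqrt2)|K|/((1-C_{\mathrm{cos}})\rho_K^3)$ in 2d), which matters here because the paper's stated aim is to make the constants in the estimator computable for a given mesh. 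Your bound hides the constant inside $\omega_d(C_{\mathrm{shrg}}/2)^{-d}\cdot(\#\mathcal E_h(K))$, which is fine for the qualitative statement but less sharp for the quantitative purpose. If you want to align with the paper's goals, you could keep your elementary skeleton and simply record the explicit dependence on $C_{\mathrm{shrg}}$ and $d$ at the end.
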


\begin{proof} The principal way for proving the statement of the lemma is the same for two and three
dimensions. It uses the mapping to the reference cell.
We will present proof for $d=2$.

{\em Relating the norms on $E$ and $\hat E$.} This step is just a one-dimensional consideration
for an edge. Thus, one has to do the same calculations in 2d and 3d.

Let $\hat K$ be the reference triangle with the vertices $\hat V_0 = (0,0)$, 
$\hat V_1 = (1,0)$, and  $\hat V_2 = (0,1)$. Since an additive constant does not play any role, 
it will be assumed that for $\hat{\varphi}_h\in \mathbb{P}_1(\hat{K})$, $\hat\varphi_h(\hat V_0) = 0$,  $\hat\varphi_h(\hat V_1) = \alpha$,  and 
$\hat\varphi_h(\hat V_2) = \beta$ with $\alpha, \beta \in \mathbb R$. Consequently, it is 
$\nabla \hat\varphi_h = (\alpha, \beta)^T$. One obtains for $\hat E = \overline{\hat V_0\hat V_1}$
and $h_{\hat E}= |\hat E| =1$
\begin{equation}\label{eq:trace_inequality_E_00}
\int_{\hat V_0}^{\hat V_1} (\nabla \hat\varphi_h \cdot \bt_{\hat E})^2\ ds = 
\left(\frac{(\hat\varphi_h(\hat V_1)-\hat\varphi_h(\hat V_0))^2}{h_{\hat E}^2} \right) h_{\hat E}
= \alpha^2.
\end{equation}
Analogously, one finds 
\begin{equation}\label{eq:trace_inequality_E_01}
\int_{\hat V_0}^{\hat V_2} (\nabla \hat\varphi_h \cdot \bt_{\hat E})^2\ ds = \beta^2, \qquad 
\int_{\hat V_0}^{\hat V_2} (\nabla \hat\varphi_h \cdot \bt_{\hat E})^2\ ds = \frac1{\sqrt2} 
(\alpha-\beta)^2.
\end{equation}

Let the reference map $F_K\ : \ \hat K \to K$ map $\hat V_0$ to $V_0$ and $\hat V_1$ to $V_1$, 
where $V_0$ and $V_1$ are vertices of $K$. Then it holds that
$\hat\varphi_h(\hat V_0)= \varphi_h(V_0)$ and
$\hat\varphi_h(\hat V_1)= \varphi_h(V_1)$.
Denote $E= \overline{V_0V_1}$, then it is 
\begin{equation*}
\int_{V_0}^{V_1} (\nabla\varphi_h\cdot \bt_{E})^2\ ds = \left(\frac{(\varphi_h(V_0) - \varphi_h(V_1))^2}{h_E^2} \right) h_E.
\end{equation*}
The value of this integral has to be equal to Eq.~\eqref{eq:trace_inequality_E_00}, from what follows that 
$$
\|\nabla\varphi_h\cdot \bt_{E}\|_{0,E}^2 = \frac{h_{\hat E}}{h_E} \|\nabla\hat \varphi_h\cdot \bt_{\hat E}\|_{0, \hat E}^2.
$$
Performing the same considerations for the other two edges, one obtains with Eq.~\eqref{eq:trace_inequality_E_01}
\begin{equation}\label{eq:trace_inequality_E_03}
\|\nabla\varphi_h\cdot \bt_{E}\|_{0,E}^2 \le \frac{\sqrt2}{h_E} \|\nabla\hat \varphi_h\cdot \bt_{\hat E}\|_{0,\hat E}^2.
\end{equation}

{\em 2d: Estimate on the reference cell.} Using Eq.~\eqref{eq:trace_inequality_E_00}, Eq.~\eqref{eq:trace_inequality_E_01} and Young's inequality yields
\begin{eqnarray*}
\sum_{\hat E \subset \partial \hat K} \|\nabla\hat \varphi_h\cdot \bt_{\hat E}\|_{0,\hat E}^2
& = & \alpha^2 + \beta^2 + \frac1{\sqrt2} (\alpha-\beta)^2\\
& \le & \left(1+\sqrt2\right) (\alpha^2+\beta^2).
\end{eqnarray*}
Since 
\begin{equation}\label{eq:trace_inequality_E_04}
\int_{\hat K} (\nabla \hat\varphi_h \cdot \nabla \hat\varphi_h)\ ds = \frac12  (\alpha^2+\beta^2),
\end{equation}
one obtains 
\begin{equation}\label{eq:trace_inequality_E_05}
\sum_{\hat E \subset \partial \hat K} \|\nabla\hat \varphi_h\cdot \bt_{\hat E}\|_{0,\hat E}^2
\le 2 \left(1+\sqrt2\right) \|\nabla \hat\varphi_h\|_{0,\hat K}^2.
\end{equation}

{\em Relating the norms on $\hat K$ and $K$.} From the standard numerical analysis it is known 
that there is a constant $C$ which is independent of $K$, such that 
\begin{equation}\label{eq:trace_inequality_E_16}
\|\nabla \hat\varphi_h\|_{0,\hat K}^2 \le Ch_K^{2-d} \|\nabla \varphi_h\|_{0,K}^2.
\end{equation}

Estimate Eq.~\eqref{eq:trace_inequality_E} is now obtained by combining Eq.~\eqref{eq:trace_inequality_E_03}, Eq.~\eqref{eq:trace_inequality_E_05}, and Eq.~\eqref{eq:trace_inequality_E_16}, and using the shape regularity of the mesh cell Eq.~\eqref{eq:shape_regu}. 
\end{proof}

\begin{remark}[More detailed estimate in 2d]
Let $\varphi_h$ be a linear function on $K$ with $\varphi_h(V_0) = 0$, $\varphi_h(V_1) = \alpha$,
and $\varphi_h(V_2) = \beta$, and $(x_0,y_0)$, $(x_1,y_1)$, and $(x_2,y_2)$ be the coordinates of $V_0,\ V_1,$ and $V_2$ respectively.  Then the standard Hessian form of the plane on $K$ is given by
$$
\varphi_h=-\left(a_4+\frac{a_1x}{a_3}+\frac{a_2y}{a_3}\right),
$$
where $a_1=(y_1-y_0)\beta-(y_2-y_0)\alpha$, $a_2=(x_2-y_0)\alpha-(x_1-x_0)\beta$, $a_3=(x_1-x_0)(y_2-y_0)-(x_2-x_0)(y_1-y_0)$, and $a_4$ is a constant which can be computed by a point on the plane. Now
$$
\nabla \varphi_h=-\frac{1}{a_3}
\begin{pmatrix}
a_1\\
a_2
\end{pmatrix}=-\frac{1}{2|K|}
\begin{pmatrix}
a_1\\
a_2
\end{pmatrix}.
$$
A direct calculation gives that
$$
\nabla\varphi_h \cdot \nabla\varphi_h = 
\frac1{4|K|^2} \left(\alpha^2 h_{E_2}^2 + \beta^2 h_{E_1}^2 - 2 \alpha \beta h_{E_1}
h_{E_2} \cos(\theta_0)\right),
$$
where $E_1$ and $E_2$ are the edges joining $(x_0,y_0)$ with $(x_1,y_1)$ and $(x_2,y_2)$, respectively and $\theta_0$ is the angle between the two edges.

Using the condition Eq.~\eqref{eq:cosine_est} on the maximal cosine, Young's inequality, the shape regularity Eq.~\eqref{eq:shape_regu}, and Eq.~\eqref{eq:trace_inequality_E_04} yields 
\begin{eqnarray*}
\|\nabla\varphi_h\|_{0,K}^2 & \ge &  \frac1{4|K|} \left(\alpha^2 h_{E_2}^2 + \beta^2 h_{E_1}^2 - 2 C_{\mathrm{cos}} |\alpha| |\beta| h_{E_1}
h_{E_2} \right)\\
& \ge & \frac1{4|K|} \left(\alpha^2 h_{E_2}^2 (1-C_{\mathrm{cos}}) + \beta^2 h_{E_1}^2 (1-C_{\mathrm{cos}}) \right)\\
& \ge & \frac{1-C_{\mathrm{cos}}}{4|K|} \rho_K^2 \left(\alpha^2 + \beta^2\right) \\
& = & \frac{1-C_{\mathrm{cos}}}{2|K|} \rho_K^2 \|\nabla \hat\varphi_h\|_{0,\hat K}^2.
\end{eqnarray*}
Combining this estimate with Eq.~\eqref{eq:trace_inequality_E_03}, Eq.~\eqref{eq:shape_regu}, and  Eq.~\eqref{eq:trace_inequality_E_05} leads to 
\begin{eqnarray*}
\sum_{E\in \mathcal E_h(K)} \|\nabla \varphi_h\cdot \bt_E\|_{0,E}^2 & \le &
\frac{\sqrt{2}}{\rho_K} \sum_{\hat E \subset \partial \hat K} \|\nabla\hat \varphi_h\cdot \bt_{\hat E}\|_{0,\hat E}^2\\
& \le & \frac{2\sqrt{2}\left(1+\sqrt2\right)}{\rho_K}  \|\nabla \hat\varphi_h\|_{0,\hat K}^2\\
&\le & \frac{4\sqrt{2}\left(1+\sqrt2\right)|K|}
{(1-C_{\mathrm{cos}})\rho_K^3} 
\|\nabla\varphi_h\|_{0,K}^2.
\end{eqnarray*}
The first factor on the right-hand side scales like $h_K^{-1}$ since $\rho_K \sim h_K$ and 
$|K| \sim h_K^2$. For a given triangulation, 
it is computable. 
\end{remark}

\section{A Posteriori Error Estimator}\label{sec:post_error}
In this section, we propose a new residual-based a posteriori error estimator for the AFC schemes in the energy norm. To the best of our knowledge, only one work has been done in the context of a posteriori error estimation and the AFC schemes (see \cite{ABR17}). A fully computable upper bound has been derived under certain assumptions on the nonlinear stabilization term. In this work, ideas from \cite{AABR13} have been extended to the AFC schemes. The estimator's design relies on introducing certain first-order consistent equilibrated fluxes and then solving a local Neumann problem to get explicit bounds. To show the estimator's local efficiency, two assumptions are made on the nonlinear stabilization ($d_h(\cdot;\cdot,\cdot)$), namely the local Lipschitz continuity and the linearity preservation. Because of the last assumption, this estimator was not applicable to the Kuzmin limiter.

The derivation of an estimator presented in this section follows the standard residual-based approach. We start with the variational formulation and use standard interpolation estimates to bound the terms. We also propose an estimator later in this section that uses the SUPG solution for bounding the error.
\subsection{Residual-Based Estimator}
\subsubsection{Global Upper Bound}\label{sec:upper_bound}
This section will present a global upper bound for the AFC scheme in the energy norm given by Eq.~\eqref{eq:energy_norm}.

Let $u\in H_D^1(\Omega)\cap C(\overline{\Omega})$ be a solution of Eq.~\eqref{eq:bilinear_form} and $u_h\in W_h$ be a solution for Eq.~\eqref{eq:afc_def}, then for $v_h\in V_h$ one obtains with Eq.~\eqref{eq:bilinear_form} and Eq.~\eqref{eq:afc_def}
\begin{eqnarray}\label{eq:galerkin_ortho}
a_{\mathrm{AFC}}(u_h;u-u_h,v_h)
&=&a(u-u_h,v_h)+d_h(u_h;u-u_h,v_h)\nonumber
\\&=&\langle f,v_h\rangle +\langle g,v_h\rangle_{\Gamma_N}-\langle f,v_h\rangle-\langle g,v_h\rangle_{\Gamma_N}+d_h(u_h;u,v_h)
\nonumber
\\&=&d_h(u_h;u,v_h).
\end{eqnarray}
For any $v\in H^1_0(\Omega)\cap C(\overline{\Omega})$, the application of  Eq.~\eqref{eq:afc_def}, Eq.~\eqref{eq:d_h_point_formulation}, and Eq.~\eqref{eq:galerkin_ortho} yields 
\begin{eqnarray*}
\lefteqn{
a_{\mathrm{AFC}}(u_h;u-u_h,v)}\\&=&a_{\mathrm{AFC}}(u_h;u-u_h,v-I_hv)+a_{\mathrm{AFC}}(u_h;u-u_h,I_hv)\\
&= &a(u-u_h,v-I_hv)+d_h(u_h;u-u_h,v-I_hv) +d_h(u_h;u,I_hv)\\
&= &\langle f,v-I_hv\rangle +\langle g,v-I_hv\rangle_{\Gamma_N}+d_h(u_h;u-u_h,v-I_hv)\\ && +d_h(u_h;u,I_hv)-a(u_h,v-I_hv).
\end{eqnarray*}
Taking $v=u-u_h$ in this equation, using $u_h=I_hu_h$, and applying integration by parts, one gets
\begin{eqnarray}\label{eq:erroR_Fqn}
\lefteqn{\|u-u_h\|_{\mathrm{AFC}}^2}\nonumber\\
&=& \|u-u_h\|_a^2+d_h(u_h;u-u_h,u-u_h)\nonumber\\
&=& a_{\mathrm{AFC}}(u_h;u-u_h,u-u_h)\nonumber\\
&=& \langle f,u-I_hu\rangle +\langle g,u-I_hu\rangle_{\Gamma_N}+d_h\left(u_h;u-u_h,u-u_h-I_h(u-u_h)\right)\nonumber \\
&&+d_h(u_h;u,I_hu- I_hu_h) -a(u_h,u-I_hu)\\
&=&\sum_{K\in \mathcal{T}_h}\left( R_K(u_h),u-I_hu\right)_K+\sum_{F\in \cF_h}\langle R_F(u_h),u-I_hu\rangle_F \nonumber \\
&& +d_h\left(u_h;u,I_hu-u_h)+d_h(u_h;u-u_h,u-u_h-I_h(u-u_h)\right),\nonumber
\end{eqnarray}
with
\begin{eqnarray*}
R_K(u_h)&:=&f+\varepsilon \Delta u_h-\boldsymbol{b}\cdot \nabla u_h-cu_h|_K,\nonumber \\
R_F(u_h)&:=&
\left\lbrace
\begin{array}{lc}
-\varepsilon [|\nabla u_h \cdot \bn_F |]_F &  \mathrm{if}\ F\in \cF_{h,\Omega},\\
g-\varepsilon (\nabla u_h \cdot \bn_F)&
\mathrm{if}\ F\in \cF_{h,N},\\
0 & \mathrm{if}\ F\in \cF_{h,D},
\end{array}\right.\nonumber
\end{eqnarray*}
where $[|\cdot|]_F$ denotes the jump across the face $F$.

The terms on the right-hand side of Eq.~\eqref{eq:erroR_Fqn} have to be bounded. For a nodal interpolation operator, the last term in Eq.~\eqref{eq:erroR_Fqn} vanishes, and hence one has to use a quasi-interpolation operator.

For the first term in Eq.~\eqref{eq:erroR_Fqn}, using the Cauchy--Schwarz inequality,
$u_h=I_hu_h$,
the interpolation estimate Eq.~\eqref{eq:interpolation_estimate} with $s=0,\ t=0$,
and the generalized Young's inequality gives
\begin{eqnarray}\label{eq:l_2_norm_est}
\sum_{K\in \mathcal{T}_h} \left( R_K(u_h), u-I_hu\right)_K&\leq& \sum_{K\in \mathcal{T}_h}\|R_K(u_h)\|_{0,K}\|u-I_hu\|_{0,K}\nonumber\\
& = & \sum_{K\in \mathcal{T}_h}\|R_K(u_h)\|_{0,K}\|(u-u_h)-I_h(u-u_h)\|_{0,K}\nonumber\\
&\leq& \sum_{K\in \mathcal{T}_h}\|R_K(u_h)\|_{0,K}C_I\|u-u_h\|_{0,K}\\
& \leq& \frac{C_YC_I^2}{2\sigma_0}\sum_{K\in \mathcal{T}_h}\|R_K(u_h)\|_{0,K}^2+\frac{\sigma_0}{2C_Y}\|u-u_h\|_{0,\Omega}^2,\nonumber
\end{eqnarray}
where $C_Y$ is the Young's inequality constant.

One can also approximate the interpolation error with Eq.~\eqref{eq:interpolation_estimate} and $s=0,\ t=1$,
leading to 
\begin{eqnarray}\label{eq:h_1_norm_est}
\sum_{K\in \mathcal{T}_h} \left( R_K(u_h), u-I_hu\right)_K&\leq& \sum_{K\in \mathcal{T}_h}\|R_K(u_h)\|_{0,K}\|u-I_hu\|_{0,K}\nonumber
\\&\leq &\sum_{K\in \mathcal{T}_h}\|R_K(u_h)\|_{0,K}C_Ih_K|u-u_h|_{1,K} 
\\& \leq &\frac{C_YC_I^2h_K^2}{2\varepsilon}  \sum_{K\in \mathcal{T}_h}\|R_K(u_h)\|_{0,K}^2 \nonumber
\\&&+\frac{\varepsilon}{2C_Y}|u-u_h|_{1,\Omega}^2.\nonumber
\end{eqnarray}
Hence, combining Eq.~\eqref{eq:l_2_norm_est} and Eq.~\eqref{eq:h_1_norm_est} gives
\begin{eqnarray}\label{eq:first_bound}
\lefteqn{\sum_{K\in \mathcal{T}_h} \left( R_K(u_h), u-I_hu\right)_K}\nonumber\\
&\leq&
\frac{C_Y}{2}\sum_{K\in \mathcal{T}_h}\mathrm{min}\left\lbrace \frac{C_I^2}{\sigma_0},\ \frac{C_I^2h_K^2}{\varepsilon}\right\rbrace\|R_K(u_h)\|_{0,K}^2+\frac{1}{2C_Y}\|u-u_h\|_a^2.
\end{eqnarray}

The estimate of the second term in Eq.~\eqref{eq:erroR_Fqn} starts also with the 
Cauchy--Schwarz inequality and using $u_h = I_h u_h$
\begin{eqnarray*}
\sum_{F\in \cF_h}\langle R_F(u_h),u-I_hu\rangle_F &\le& 
\sum_{F\in \cF_h} \| R_F(u_h)\|_{0,F} \|u-I_hu\|_{0,F}\\
& = & \sum_{F\in \cF_h} \| R_F(u_h)\|_{0,F} \|(u-u_h) - I_h(u-u_h)\|_{0,F}.
\end{eqnarray*}
The local trace estimate Eq.~\eqref{eq:trace_inequality_F} is applied to the second factor on the right-hand side. After this, one proceeds essentially as for the mesh cell residual by using the interpolation estimate Eq.~\eqref{eq:interpolation_estimate}, considering the cases $s=t=0$ and $s=0, t=1$ for the interpolation error in $L^2(K)$, 
performing some straightforward calculations, compare \cite{JN13}, and using the shape regularity of the mesh cell, to find 
\begin{equation*}
\|(u-u_h) - I_h(u-u_h)\|_{0,F} \le C_F \min \left\{\frac{h_F^{1/2}}{\varepsilon^{1/2}}, \frac1{\sigma_0^{1/4}\varepsilon^{1/4}} \right\} \|u-u_h\|_a,
\end{equation*}
where the constant $C_F$ depends on the constant from Eq.~\eqref{eq:trace_inequality_F} and the interpolation constant. Applying now the generalized Young's inequality, one gets for the face residuals
\begin{eqnarray}\label{eq:est_face_res}
\lefteqn{
\sum_{F\in \cF_h}\langle R_F(u_h),u-I_hu\rangle_F}\nonumber\\
&\le& \frac{C_Y}{2}\sum_{F\in \cF_h} \min \left\{\frac{C_F^2 h_F}{\varepsilon}, \frac{C_F^2}{\sigma_0^{1/2}\varepsilon^{1/2}} \right\}\| R_F(u_h)\|_{0,F}^2
+\frac{1}{2C_Y}\|u-u_h\|_a^2.
\end{eqnarray}

As intermediate result, one obtains from Eq.~\eqref{eq:erroR_Fqn}, Eq.~\eqref{eq:first_bound}, and Eq.~\eqref{eq:est_face_res}
\begin{eqnarray}\label{eq:est_all_00}
\lefteqn{\|u-u_h\|_a^2+\frac{C_Y}{C_Y-1}d_h(u_h;u-u_h,u-u_h)}\nonumber\\
& \le &\frac{C^2_Y}{2(C_Y-1)}\sum_{K\in \mathcal{T}_h}\mathrm{min}\left\lbrace \frac{C_I^2}{\sigma_0},\ \frac{C_I^2h_K^2}{\varepsilon}\right\rbrace\|R_K(u_h)\|_{0,K}^2\nonumber\\
&& + \frac{C^2_Y}{2(C_Y-1)}\sum_{F\in \cF_h} \min \left\{\frac{C_F^2 h_F}{\varepsilon}, \frac{C_F^2}{\sigma_0^{1/2}\varepsilon^{1/2}} \right\}\| R_F(u_h)\|_{0,F}^2\\
&& + \frac{C_Y}{C_Y-1}d_h(u_h;u,I_hu-u_h) + \frac{C_Y}{C_Y-1}d_h\left(u_h;u-u_h,u-u_h-I_h(u-u_h)\right).\nonumber
\end{eqnarray}

We estimate the last two term in Eq.~\eqref{eq:est_all_00}, by using Eq.~\eqref{linearity}, and Remark~\ref{rem:interpoaltion_assumption}, leading to 
\begin{eqnarray}\label{eq:d_h_est}
\lefteqn{d_h(u_h;u-u_h,u-u_h-I_h(u-u_h))+d_h(u_h;u,I_h(u-u_h))}\nonumber \\
& = & d_h(u_h;u-u_h,u-u_h)-d_h(u_h;u,I_h(u-u_h))\nonumber \\
&& +d_h(u_h;u_h,I_h(u-u_h))+d_h(u_h;u,I_h(u-u_h))\nonumber \\
& = & d_h(u_h;u-u_h,u-u_h)+d_h(u_h;u_h,I_h(u-u_h)).
\end{eqnarray}
Inserting this relation in Eq.~\eqref{eq:est_all_00} reveals that the stabilization term on the left-hand side cancels with the first term on the right-hand side of Eq.~\eqref{eq:d_h_est}.
Consequently, only the energy norm is left to be estimated. 

Since $I_h u-u_h$ is linear on each edge, the second term on the right-hand side of Eq.~\eqref{eq:d_h_est} can be rewritten as integral over the edges, see Eq.~\eqref{eq:d_h_edge_formulation}, and estimated with the Cauchy--Schwarz inequality and the generalized Young's inequality
\begin{eqnarray}\label{eq:est_dh_00}
\lefteqn{
 d_h(u_h;u_h,I_h u-u_h)}\nonumber\\
 & = & \sum_{\me}(1-\alpha_E)|d_E| h_E 
 (\nabla u_h \cdot\bt_E, \nabla (I_h u-u_h)\cdot\bt_E)_E\nonumber\\
 & \le & \sum_{\me}(1-\alpha_E)|d_E| h_E \|\nabla u_h \cdot\bt_E\|_{0,E}
 \|\nabla (I_h u-u_h)\cdot\bt_E\|_{0,E}\nonumber\\
 & \le & \frac{1}{2C_Y\kappa_1}\sum_{\me} \varepsilon h_E^{d-1} \|\nabla (I_h u-u_h)\cdot\bt_E\|_{0,E}^2\nonumber \\
 && + \frac{C_Y\kappa_1}2
 \sum_{\me}\varepsilon^{-1} (1-\alpha_E)^2|d_E|^2 h_E^{3-d} \|\nabla u_h \cdot\bt_E\|_{0,E}^2.
\end{eqnarray} 
The parameter $\kappa_1$ will be defined later. 
The second term is computable. 

Consider the first term in Eq.~\eqref{eq:est_dh_00}. Denoting
$$
C_{\mathrm{edge,max}} = \max_{K\in \mathcal T_h} C_{\mathrm{edge}},
$$
using $h_E \le h_K$, $d-1 > 0$, Eq.~\eqref{eq:trace_inequality_E}, the triangle inequality, and Eq.~\eqref{eq:stab_inter} yields 
\begin{eqnarray}\label{eq:est_dh_01}
\lefteqn{
\frac1\kappa_1\sum_{\me} \varepsilon h_E^{d-1} \|\nabla (I_h u-u_h)\cdot\bt_E\|_{0,E}^2}\nonumber\\
&\le & \frac{\varepsilon}\kappa_1\sum_{K \in \mathcal T_h}\left( \sum_{E\in \partial K}
h_E^{d-1} \|\nabla (I_h u-u_h)\cdot\bt_E\|_{0,E}^2\right)\nonumber\\
& \le & \frac{\varepsilon}{\kappa_1}\sum_{K \in \mathcal T_h} C_{\mathrm{edge}} 
\|\nabla (I_h u-u_h)\|_{0,K}^2\nonumber\\
& \le & \frac{2 \varepsilon C_{\mathrm{edge,max}}}{\kappa_1}\sum_{K \in \mathcal T_h}
\left(\|\nabla (u-u_h)\|_{0,K}^2+ \|\nabla (u- I_h u)\|_{0,K}^2\right)\nonumber\\
& \le & \frac{2 C_{\mathrm{edge,max}} (1+(1+C_I)^2)}{\kappa_1} \|u-u_h\|_a^2.
\end{eqnarray}
Choosing 
\begin{equation}\label{eq:kappa_1}
\kappa_1 =  C_{\mathrm{edge,max}} \left(1+(1+C_I)^2\right),
\end{equation}
then this term multiplied with $(2C_Y)^{-1}$ can be absorbed in the left-hand side of Eq.~\eqref{eq:est_all_00}.

An alternative estimate proceeds similarly to Eq.~\eqref{eq:est_dh_00}
\begin{eqnarray}\label{eq:est_dh_02}
 d_h(u_h;u_h,I_h u-u_h)
  & \le & \frac{1}{2C_Y\kappa_2}\sum_{\me} \sigma_0 h_E^{d+1} \|\nabla (I_h u-u_h)\cdot\bt_E\|_{0,E}^2\nonumber\nonumber \\
 && + \frac{C_Y\kappa_2}2
 \sum_{\me}\sigma_0^{-1} (1-\alpha_E)^2|d_E|^2 \nonumber \\
 &&\times h_E^{1-d} \|\nabla u_h \cdot\bt_E\|_{0,E}^2,
\end{eqnarray} 
for some constant $\kappa_2$ which will be defined later.

Continuing similarly to Eq.~\eqref{eq:est_dh_01} and using in addition the inverse inequality Eq.~\eqref{eq:inv_est} leads to 
\begin{eqnarray}\label{eq:est_dh_03}
\lefteqn{\frac1\kappa_2\sum_{\me} \sigma_0 h_E^{d+1} \|\nabla (I_h u-u_h)\cdot\bt_E\|_{0,E}^2}\nonumber
\\
& \le & \frac{\sigma_0}{\kappa_2}\sum_{K \in \mathcal T_h} C_{\mathrm{edge}} C_{\mathrm{inv}}^2
\|I_h u-u_h\|_{0,K}^2\nonumber \\
& \le & \frac{2 C_{\mathrm{inv}}^2 C_{\mathrm{edge,max}} \left(1+(1+C_I)^2\right)}{\kappa_2} \|u-u_h\|_a^2.
\end{eqnarray}
Choosing 
\begin{equation}\label{eq:kappa_2}
\kappa_2 =  C_{\mathrm{inv}}^2 C_{\mathrm{edge,max}} \left(1+(1+C_I)^2\right)
\end{equation}
enables again to absorb this term multiplied with $(2C_Y)^{-1}$ in the left-hand side of Eq.~\eqref{eq:est_all_00}. Inserting Eq.~\eqref{eq:d_h_est} -- 
Eq.~\eqref{eq:kappa_2} in Eq.~\eqref{eq:est_all_00} one gets

\begin{eqnarray}\label{eq:est_all_temp}
\lefteqn{\|u-u_h\|_a^2}\nonumber\\
& \le &\frac{C^2_Y}{2(C_Y-2)}\sum_{K\in \mathcal{T}_h}\mathrm{min}\left\lbrace \frac{C_I^2}{\sigma_0},\ \frac{C_I^2h_K^2}{\varepsilon}\right\rbrace\|R_K(u_h)\|_{0,K}^2\nonumber\\
&& + \frac{C^2_Y}{2(C_Y-2)}\sum_{F\in \cF_h} \min \left\{\frac{C_F^2 h_F}{\varepsilon}, \frac{C_F^2}{\sigma_0^{1/2}\varepsilon^{1/2}} \right\}\| R_F(u_h)\|_{0,F}^2\\
&& + \frac{C_Y^2}{2(C_Y-2)}\sum_{\me}\min \Bigg\{\frac{\kappa_1h_E^2}{
\varepsilon}, \frac{\kappa_2}{\sigma_0}\Bigg\} (1-\alpha_E)^2|d_E|^2 h_E^{1-d} \|\nabla u_h \cdot\bt_E\|_{0,E}^2.\nonumber
\end{eqnarray}
Using standard calculus arguments one gets an optimal value of $C_Y=4$.

The estimates are summarized in the following theorem. 

\begin{theorem}[Global a posteriori error estimate] A global a posteriori error estimate 
for the energy norm is given by 
\begin{equation}\label{eq:global_upper_bound}
\|u-u_h\|_a^2\leq \eta_1^2+\eta_2^2+\eta_3^2,
\end{equation}
where
\begin{eqnarray*}
\eta_1^2& = & \sum_{K\in \mathcal{T}_h}\mathrm{min}\left\{ \frac{4 C_I^2}{\sigma_0},\ \frac{4C_I^2h_K^2}{\varepsilon}\right\}\|R_K(u_h)\|_{0,K}^2, \\ 
\eta_2^2 & =&   \sum_{F\in \cF_h} \min \left\{\frac{4 C_F^2 h_F}{\varepsilon}, \frac{4 C_F^2}{\sigma_0^{1/2}\varepsilon^{1/2}} \right\}\| R_F(u_h)\|_{0,F}^2,\\
\eta_3^2 & = & \sum_{\me}\min \Bigg\{\frac{4\kappa_1h_E^2}{
\varepsilon}, \frac{4\kappa_2}{\sigma_0}\Bigg\} (1-\alpha_E)^2|d_E|^2 h_E^{1-d} \|\nabla u_h \cdot\bt_E\|_{0,E}^2\nonumber,
\end{eqnarray*}
with $\kappa_1$ and $\kappa_2$ defined in Eq.~\eqref{eq:kappa_1} and Eq.~\eqref{eq:kappa_2}, respectively \revi{and $C_I,\ C_F$ are the non-computable constants arising from interpolation estimate and trace inequalities}.
\end{theorem}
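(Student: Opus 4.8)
The plan is to obtain Eq.~\eqref{eq:global_upper_bound} by threading the error equation through a chain of localized bounds and absorbing every term proportional to $\|u-u_h\|_a^2$ into the left-hand side. First I would start from the Galerkin-orthogonality-type identity Eq.~\eqref{eq:galerkin_ortho}, insert the splitting $v = I_h v + (v - I_h v)$ with the Scott--Zhang quasi-interpolant from Remark~\ref{rem:interpoaltion_assumption} (which is needed instead of nodal interpolation precisely because $L^2$-stability is used later), and then integrate by parts cellwise. Choosing the test function $v = u - u_h$ and using $I_h u_h = u_h$ gives Eq.~\eqref{eq:erroR_Fqn}, which expresses $\|u-u_h\|_{\mathrm{AFC}}^2$ (hence, via Eq.~\eqref{eq:afc_norm}, $\|u-u_h\|_a^2$ plus the stabilization term) as a sum of mesh-cell residual terms $(R_K(u_h),u-I_hu)_K$, face residual terms $\langle R_F(u_h),u-I_hu\rangle_F$, and two nonlinear stabilization contributions.

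Next I would bound the three kinds of terms. For the mesh-cell residuals, Cauchy--Schwarz followed by the interpolation estimate Eq.~\eqref{eq:interpolation_estimate} in both the $s=t=0$ and the $s=0,\,t=1$ forms, together with the generalized Young inequality, yields Eq.~\eqref{eq:first_bound}: a computable weighted sum of $\|R_K(u_h)\|_{0,K}^2$ with the $\min$ of the two scalings, plus a fraction $\tfrac{1}{2C_Y}\|u-u_h\|_a^2$. The face residuals are treated the same way, the extra ingredient being the local trace inequality Eq.~\eqref{eq:trace_inequality_F} applied to $(u-u_h)-I_h(u-u_h)$, which after combining the two interpolation cases and using shape regularity produces Eq.~\eqref{eq:est_face_res}. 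Collecting these gives the intermediate bound Eq.~\eqref{eq:est_all_00}, still carrying the two $d_h$ terms on its right-hand side.

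The delicate step is the stabilization term. Here I would use the bilinearity Eq.~\eqref{linearity} of $d_h$ in its last two arguments and the identity $I_h u_h = u_h$ to carry out the rearrangement in Eq.~\eqref{eq:d_h_est}, which must reproduce \emph{exactly} $+\,d_h(u_h;u-u_h,u-u_h)$ so that it cancels the stabilization term already on the left of Eq.~\eqref{eq:est_all_00}, leaving only the single term $d_h(u_h;u_h,I_h u-u_h)$. Since $I_h u - u_h$ is piecewise linear, I would pass to the edge representation Eq.~\eqref{eq:d_h_edge_formulation}, apply Cauchy--Schwarz on each edge and the generalized Young inequality with a free parameter, using either the $\varepsilon$-weighted split (parameter $\kappa_1$, Eq.~\eqref{eq:est_dh_00}) or the $\sigma_0$-weighted split (parameter $\kappa_2$, Eq.~\eqref{eq:est_dh_02}). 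The $\|\nabla u_h\cdot\bt_E\|_{0,E}^2$ part is computable and becomes $\eta_3^2$; the $\|\nabla(I_h u - u_h)\cdot\bt_E\|_{0,E}^2$ part is controlled by summing over cells and invoking the edge-trace inequality Eq.~\eqref{eq:trace_inequality_E} (and, in the $\kappa_2$ branch, additionally the inverse estimate Eq.~\eqref{eq:inv_est}), the triangle inequality and interpolation stability Eq.~\eqref{eq:stab_inter}, exactly as in Eq.~\eqref{eq:est_dh_01} and Eq.~\eqref{eq:est_dh_03}; choosing $\kappa_1,\kappa_2$ as in Eq.~\eqref{eq:kappa_1}--Eq.~\eqref{eq:kappa_2} makes both contributions absorbable.

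Finally I would collect the absorbable fractions arising from the mesh-cell term, the face term, and the two stabilization pieces, move them to the left (the bookkeeping turns $C_Y-1$ into $C_Y-2$, as in Eq.~\eqref{eq:est_all_temp}), and optimize the single free constant to $C_Y = 4$, which yields the prefactors $4C_I^2$, $4C_F^2$, $4\kappa_1$, $4\kappa_2$ in the statement. I expect the genuinely tricky point to be the stabilization term: getting the signs right in the rearrangement Eq.~\eqref{eq:d_h_est} so that the cancellation is exact, and checking that the edge-trace and inverse estimates legitimately apply to $I_h u - u_h \in \mathbb{P}_1$ on each cell; the remainder is careful but routine tracking of the minima and the Young constants.
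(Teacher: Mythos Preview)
Your proposal is correct and follows essentially the same approach as the paper: the derivation of Eq.~\eqref{eq:est_all_temp} in Sec.~\ref{sec:upper_bound} proceeds exactly through the error identity Eq.~\eqref{eq:erroR_Fqn}, the cell and face bounds Eq.~\eqref{eq:first_bound}--Eq.~\eqref{eq:est_face_res}, the stabilization rearrangement Eq.~\eqref{eq:d_h_est} with the crucial cancellation you flag, and the edge estimates Eq.~\eqref{eq:est_dh_00}--Eq.~\eqref{eq:kappa_2}, after which the theorem is obtained by setting $C_Y=4$. Your bookkeeping of the absorption (the shift from $C_Y-1$ to $C_Y-2$) and the optimization of $C_Y$ are also in line with the paper.
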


\begin{proof}
The proof follows by inserting $C_Y=4$ in Eq.~\eqref{eq:est_all_temp}.
\end{proof}

\subsubsection{\revi{Formal Local Lower Bound}}\label{sec:lower_bound}
The posteriori estimator implied by the equation Eq.~\eqref{eq:global_upper_bound}
$$
\|u-u_h\|_a^2\leq C\sum_{K\in \mathcal{T}}\eta_K^2,
$$
provides a global upper bound on the discretization error up to the constant $C$. For using this estimator as the basis of an adaptive refinement algorithm, one wants the estimator to be efficient in the sense that $C$ is independent of the mesh size such that
$$
\eta_K^2\leq C\|u-u_h\|_{a,\omega_K}^2,
$$
where $\omega_K$ is some neighborhood of $K$. This type of bound is important as in conjunction with Eq.~\eqref{eq:global_upper_bound} it confirms that the rate of change of estimator as the mesh size is reduced matches the behavior of the actual error. If no such estimate is available, the estimator's performance is not optimal, and its use in the applications may result in poorly designed meshes. 

Consider a mesh cell $K$. Now the local estimator for mesh cell $K$ is defined as
\begin{equation}\label{eq:local_estimator}
\eta_K^2=\eta_{\mathrm{Int,K}}^2+\sum_{F\in \mathcal{F}_h(K)}\eta^2_{\mathrm{Face}, F}+\sum_{E\in \mathcal{E}_h(K)}\eta^2_{d_h, E}
\end{equation}
with
\begin{eqnarray}
\eta_{\mathrm{Int}, K}^2 & = & \mathrm{min}\left\lbrace \frac{4C_I^2}{\sigma_0},\frac{4C_I^2h_K^2}{\varepsilon}\right\rbrace \|R_{K,h}(u_h)\|_{0,K}^2,\nonumber \\
\eta_{\mathrm{Face}, F}^2 & = & \frac{1}{2}\mathrm{min}\left\lbrace \frac{4C_F^2h_F}{\varepsilon},\frac{4C_F^2}{\sigma^{1/2}_0\varepsilon^{1/2}}\right\rbrace \|R_F(u_h)\|_{0,F}^2,\nonumber \\
\eta_{d_h, E}^2 & = & \min \Bigg\{\frac{4\kappa_1h_E^2}{
\varepsilon}, \frac{4\kappa_2}{\sigma_0}\Bigg\} (1-\alpha_E)^2|d_E|^2 h_E^{1-d} \|\nabla u_h \cdot\bt_E\|_{0,E}^2,\nonumber \\
\end{eqnarray}
where $\mathcal{F}_h(K)$ is the set of all facets of $K$. Each inner facet belongs to two mesh cells, that's why the factor of $1/2$ is introduced.

The first two terms in the right of Eq.~\eqref{eq:local_estimator} are the standard interior and face residual terms that appear in a residual-based a posteriori error estimator for convection-diffusion equations. Using standard bubble function arguments introduced in \cite{Ver98}, one can bound these terms. For brevity we would not be deriving these bounds and only mention the final estimates.

For the interior residual one gets,
\begin{eqnarray}\label{eq:interior_residual_003}
\eta_{\mathrm{Int},K}
& \le & C\Bigg(\mathrm{max}\left\lbrace C_K^2+\frac{C_Kh_K}{\varepsilon}\|\bb\|_{\infty,K},\frac{C_K}{\sigma_0}\|c\|_{\infty,K}\right\rbrace\|u-u_h\|_{a,K}\\
&& +\frac{h_K}{\varepsilon^{1/2}}C_K\Big( \|f-f_h\|_{0,K}+\|(\bb-\bb_h)\cdot \nabla u_h\|_{0,K} +\|(c-c_h)u_h\|_{0,K}\Big)\Bigg),\nonumber
\end{eqnarray}
and for the face residual one gets,
\begin{eqnarray}\label{eq:face_residual_eq_003}
\eta_{\mathrm{Face},F}
& \le & C\Bigg(\mathrm{max}\left\lbrace C_{FB}+\frac{C_{FB}h_F\|\bb\|_{\infty,\omega_F}}{\varepsilon},\frac{C_{FB}h_F\|c\|_{\infty,\omega_F}}{\varepsilon^{1/2}\sigma_0^{1/2}}\right\rbrace \nonumber\\
&&\times \|u-u_h\|_{a,\omega_F} + \delta_{F\in \mathcal{F}_{h,N}}\frac{h_F^{1/2}}{\varepsilon^{1/2}}\|g-g_h\|_{0,F}\nonumber\\
&& + \sum_{K\in \omega_F}\Big[\eta_{\mathrm{Int},K}+\frac{h_K}{\varepsilon^{1/2}}\Big(\|f-f_h\|_{0,K} \nonumber \\
&& +\|(\bb-\bb_h)\cdot \nabla u_h\|_{0,K}+\|(c-c_h)u_h\|_{0,K}\Big)\Big]\Bigg),
\end{eqnarray}
where \revi{$\bb_h,c_h,f_h,$ and $g_h$ are approximations of the coefficients in the finite-dimensional space}, $C_K$ and $C_{\mathrm{FB}}$ are the constants appearing from bubble function arguments, \revi{$\delta_{F\in \mathcal{F}_{h,N}}$ is the Kronecker delta function which is one if the face belongs to the Neumann boundary,} and $C$ is a general constant independent of $h$.

\textbf{Edge Residuals:}\label{subsec:edge_estimates} The final term one wants to bound in $\eta_K$ is the AFC contribution. A similar term can be observed in \cite[Theorem~2]{ABR17}. Based on certain assumptions on the nonlinear stabilization, namely the Lipschitz continuity and linearity preservation, that term is bounded. We will not use such assumptions as they do not encompass the limiter that will be \revi{presented in the numerical simulations, namely the Kuzmin limiter}.

From the proof of \cite[Lemma~2]{BJKR18} we have
\begin{equation}\label{eq:d_E_bound}
|d_E|\leq C\left( \varepsilon +\|\bb\|_{\infty,\Omega}h+\|c\|_{\infty,\Omega}h^2\right) h_E^{d-2}.
\end{equation}
We have
\begin{eqnarray*}
\eta_{d_h, E} & \leq &C\sum_{E\in \mathcal{E}_h}(1-\alpha_E)|d_E|h_E^{(1-d)/2}\mathrm{min}\left\lbrace \frac{h_E}{\varepsilon^{1/2}},\frac{1}{\sigma_0^{1/2}}\right \rbrace\|\nabla u_h\cdot \bt_E\|_{0,E}.
\end{eqnarray*}

Hence, we get from Eq.~\eqref{eq:d_E_bound}
\begin{eqnarray}\label{eq:edge_residual}
\eta_{d_h, E} &\leq &C\sum_{E\in \mathcal{E}_h}(1-\alpha_E)\left( \varepsilon +\|\bb\|_{\infty,\Omega}h+\|c\|_{\infty,\Omega}h^2\right)\nonumber \\
&& \times \frac{h_E^{(3-d)/2}}{\varepsilon^{1/2}}\|\nabla u_h\cdot \bt_E\|_{0,E}\nonumber \\
& = & C\sum_{E\in \mathcal{E}_h}(1-\alpha_E)\left( \varepsilon^{1/2} +\frac{\|\bb\|_{\infty,\Omega}h}{\varepsilon^{1/2}}+\frac{\|c\|_{\infty,\Omega}h^2}{\varepsilon^{1/2}}\right)\nonumber \\
&& \times h_E^{(3-d)/2}\|\nabla u_h\cdot \bt_E\|_{0,E}.
\end{eqnarray}
For a fixed $\varepsilon$, we consider the convection-dominated regime, i.e., $\varepsilon\leq h$, then we get
$$
\eta_{d_h, E}=\mathcal{O}(h)
$$
in 2d, and 
$$
\eta_{d_h, E}=\mathcal{O}(h^{1/2})
$$
in 3d, whereas, for diffusion-dominated case we get $\mathcal{O}(h^{1/2})$ in 2d. This term is not exactly an oscillation. It is noted in \cite{BJK16} that the average rate of decay for the first factor in parentheses is one but no concrete analysis has been provided. Altogether this term has to be studied numerically. Also for shock-capturing methods a priori estimates usually give $\mathcal{O}(h^{1/2})$ convergence (see \cite[Corollary~17]{BJK16}), then we can expect the last term to behave as an oscillation (see \cite[Remark~5]{ABR17}). \revi{This is the reason we call this local lower bound a formal local lower bound}.

\begin{remark}
To simplify the notation we will denote $\eta_{d_h,E}$ by $\eta_{d_h}$ whenever we don't have ambiguity for $E$. Numerical examples will be presented in Sec.~\ref{sec:numres_post} to show the behavior of $\eta_{d_h}$.
\end{remark}

\begin{theorem}
There exists a constant $C>0$, independent of the size of elements of $\mathcal{T}$, such that, for every $K\in \mathcal{T}$, the following \revi{formal} local lower bound holds
\begin{eqnarray}\label{eq:lower_bound}
\lefteqn{\eta_{\mathrm{Int}, K}+\sum_{\revi{F}\in \mathcal{F}_h(K)}\eta_{\mathrm{Face},F}+\sum_{E\in \mathcal{E}_h(K)}\eta_{d_h,E}}\nonumber\\
& \le & \mathrm{max}\left\lbrace C_K^2+\frac{C_Kh_K}{\varepsilon}\|\bb\|_{\infty,K},\frac{C_K}{\sigma_0}\|c\|_{\infty,K}\right\rbrace \|u-u_h\|_{a,\omega_K}\nonumber\\
&& + C\sum_{\revi{K'}\in \omega_K}\frac{h_{K'}}{\varepsilon^{1/2}}\left(\|f-f_h\|_{0,K'}+\|(\bb-\bb_h)\cdot \nabla u_h\|_{0,K'}+\|(c-c_h)u_h\|_{0,K'}\right)\nonumber\\
&& + C\sum_{F\in \mathcal{F}_h(K)} \delta_{F\in \mathcal{F}_{h,N}}\frac{h_F^{1/2}}{\varepsilon^{1/2}}\|g-g_h\|_{0,F}\nonumber\\
&& + \sum_{E\in \mathcal{E}_{h}(K)}h^{1-d/2}\frac{h^{1/2}}{\varepsilon^{1/2}}\Big( \varepsilon+\|b\|_{\revi{\infty,\Omega}}h+\|c\|_{\revi{\infty,\Omega}}h^2\Big)\|\nabla u_h\cdot \bt_E\|_{0,E}.
\end{eqnarray}
\end{theorem}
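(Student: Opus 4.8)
The plan is to assemble the three local bounds that have essentially already been established individually in Eq.~\eqref{eq:interior_residual_003}, Eq.~\eqref{eq:face_residual_eq_003}, and Eq.~\eqref{eq:edge_residual}, and to combine them by summing over the relevant sub-simplices of the patch $\omega_K$. First I would invoke the interior-residual bound Eq.~\eqref{eq:interior_residual_003} directly; this accounts for the first line on the right-hand side of Eq.~\eqref{eq:lower_bound} (the $\|u-u_h\|_{a,\omega_K}$-term with the $\mathrm{max}\{\cdot\}$ prefactor) and contributes the $h_K \varepsilon^{-1/2}(\|f-f_h\|_{0,K}+\cdots)$ data-oscillation part. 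Then I would invoke the face-residual bound Eq.~\eqref{eq:face_residual_eq_003}, which again produces an energy-norm term over the slightly larger patch $\omega_F$, a Neumann-data oscillation term $\delta_{F\in\mathcal F_{h,N}} h_F^{1/2}\varepsilon^{-1/2}\|g-g_h\|_{0,F}$, and — crucially — a nested copy of $\eta_{\mathrm{Int},K'}$ plus its own data oscillation for $K' \in \omega_F$; the nested $\eta_{\mathrm{Int},K'}$ is then re-expanded via Eq.~\eqref{eq:interior_residual_003} and reabsorbed into the energy-norm and oscillation terms, so only those two kinds of quantities survive. Since $\omega_F \subset \omega_K$ for every $F \in \mathcal F_h(K)$, and since there are boundedly many faces per cell (by shape regularity), all these patch contributions can be bounded by a single $\|u-u_h\|_{a,\omega_K}$-term and a single sum of oscillations over $K' \in \omega_K$, at the cost of enlarging the generic constant $C$.

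For the edge term I would use Eq.~\eqref{eq:edge_residual}, noting that it is not controlled by the energy norm at all but is instead a standalone quantity of the form $(1-\alpha_E)\bigl(\varepsilon^{1/2}+\|\bb\|_{\infty,\Omega}h\varepsilon^{-1/2}+\|c\|_{\infty,\Omega}h^2\varepsilon^{-1/2}\bigr) h_E^{(3-d)/2}\|\nabla u_h\cdot\bt_E\|_{0,E}$; since $1-\alpha_E \le 1$, one may simply drop the limiter factor and rewrite $h_E^{(3-d)/2} = h_E^{1-d/2}\,h_E^{1/2}$, which matches the last line of Eq.~\eqref{eq:lower_bound} after replacing $h_E$ by the global $h$ (legitimate because $h_E \le h$ and these are upper bounds). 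This is precisely the term the paper has flagged as preventing a genuine efficiency estimate, which is why the statement is only a \emph{formal} local lower bound and why it is left as a free-standing summand rather than dominated by $\|u-u_h\|_{a,\omega_K}$.

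Concretely the steps, in order, are: (i) bound $\eta_{\mathrm{Int},K}$ by Eq.~\eqref{eq:interior_residual_003}; (ii) bound each $\eta_{\mathrm{Face},F}$, $F\in\mathcal F_h(K)$, by Eq.~\eqref{eq:face_residual_eq_003}, substitute Eq.~\eqref{eq:interior_residual_003} for the nested $\eta_{\mathrm{Int},K'}$ terms, and use $\omega_F\subset\omega_K$ together with the uniformly bounded number of faces of $K$ to collapse all resulting patches onto $\omega_K$; (iii) bound each $\eta_{d_h,E}$, $E\in\mathcal E_h(K)$, by Eq.~\eqref{eq:edge_residual} with $1-\alpha_E\le1$ and $h_E\le h$; (iv) add the three estimates, collect the $\|u-u_h\|_{a,\omega_K}$ contributions under one $\mathrm{max}\{\cdot\}$ prefactor and absorb the face-patch energy contribution into it by enlarging $C$, and collect the oscillation sums over $K'\in\omega_K$, $F\in\mathcal F_h(K)$, and the edge sum over $E\in\mathcal E_h(K)$ as they appear on the right of Eq.~\eqref{eq:lower_bound}. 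The only genuinely delicate point — and the reason the result is "formal" — is step (iii): the edge term cannot be bootstrapped against the energy error, so it stays as an extra, possibly non-vanishing, summand; the rest is bookkeeping over the fixed-cardinality patch $\omega_K$, with all constants either those of Eq.~\eqref{eq:interior_residual_003}--\eqref{eq:edge_residual} or absorbable into the generic $C$.
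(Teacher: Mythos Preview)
Your proposal is correct and follows exactly the same approach as the paper, which simply states that the estimate is obtained by combining Eq.~\eqref{eq:interior_residual_003}, Eq.~\eqref{eq:face_residual_eq_003}, and Eq.~\eqref{eq:edge_residual}. Your write-up is in fact considerably more detailed than the paper's one-line proof, spelling out the nesting of $\eta_{\mathrm{Int},K'}$ inside the face bound and the patch bookkeeping that the paper leaves implicit.
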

\begin{proof}
This estimate can be obtained by combining Eq.~\eqref{eq:interior_residual_003}, Eq.~\eqref{eq:face_residual_eq_003}, and Eq.~\eqref{eq:edge_residual}.
\end{proof}

\begin{remark}
We note that the estimator is not robust with respect to $\varepsilon$. However, this is the usual case for a posteriori error estimators for the error measured in the energy norm. In \cite{TV15} residual-based a posteriori estimators for the error were proved robust with respect to a norm that includes a dual norm of the convective term. However, all the methods considered in \cite{TV15} were linear, and applying those techniques to nonlinear discretizations such as AFC does not seem feasible.
\end{remark}

\subsection{AFC-SUPG Estimator}\label{sec:afc_supg_est}
An alternative way of finding a global upper bound for the error in the energy norm for the AFC scheme is to use the estimator proposed in \cite{JN13}. An upper bound which is robust with respect to the diffusion coefficient, $\varepsilon$, was derived for the error in the SUPG norm \cite[Eq.~(11)]{JN13} for the SUPG scheme. It has been noted in \cite{JJ18} that choosing the initial solution as the SUPG solution for the nonlinear system of equations was the most appropriate. We exploit this fact to bound our error.

Let $u_{\mathrm{AFC}},\ u_{\mathrm{SUPG}}$ denote the AFC and SUPG solution, respectively. Then by the triangle inequality
\begin{equation*}
\begin{aligned}
\|u-u_{\mathrm{AFC}}\|_a^2 &\leq 2\left( \|u-u_{\mathrm{SUPG}}\|_a^2+\|u_{\mathrm{SUPG}}-u_{\mathrm{AFC}}\|_a^2\right)
\\&\leq 2\left( \|u-u_{\mathrm{SUPG}}\|_{\mathrm{SUPG}}^2+\|u_{\mathrm{SUPG}}-u_{\mathrm{AFC}}\|_a^2\right).
\end{aligned}
\end{equation*}
The first term can be bounded by \cite[Theorem~2.1]{JN13} and the second term is computable. Let
$$
\|u-u_{\mathrm{SUPG}}\|_{\mathrm{SUPG}}^2\leq \eta_{\mathrm{SUPG}}^2,
$$
where $\eta_{\mathrm{SUPG}}^2$ is given by \cite[Eq.~(36)]{JN13} and
$$
\eta_{\mathrm{AFC-SUPG}}:=\|u_{\mathrm{AFC}}-u_{\mathrm{SUPG}}\|_a,
$$
then
$$
\|u-u_{\mathrm{AFC}}\|_a^2\leq \eta^2,
$$
where
$$
\eta^2=2\left(\eta_{\mathrm{SUPG}}^2+\eta_{\mathrm{AFC-SUPG}}^2\right).
$$
Numerical simulations depicting the behavior of $\eta_{\mathrm{SUPG}}, \eta_{\mathrm{AFC-SUPG}}$ along with the adaptive refinement of grids will be presented in Sec.~\ref{sec:numres_post}. \revi{A local lower bound for this estimator will not be provided in this paper.}

\section{Numerical Studies}\label{sec:numres_post}
The standard strategy for numerically solving a partial differential equation on adaptively refined grids using an a posteriori error estimator is
$$
\mathbf{SOLVE}\rightarrow \mathbf{ESTIMATE}\rightarrow \mathbf{MARK}\rightarrow \mathbf{REFINE}.
$$
We note that to refine a grid adaptively, two important things are required:
\begin{itemize}
\item \emph{Marking strategy} that decides which mesh cells should be refined,
\item \emph{Refinement rules} which determines the actual subdivision of a mesh cell.
\end{itemize}
There are two marking strategies that are widely used in a posteriori packages, namely the \emph{maximum marking strategy} and the \emph{equilibration marking strategy} (see \cite{Ver13}). It is noted in \cite{Ver13} that both the strategies produce comparable results. Still, it is computationally cheaper to implement the maximum marking strategy, and hence it is used in our simulations. For refining of the mesh cells, \emph{red-green refinement} rules were used (see \cite{Ver13}) which would be referred to as conforming closure in the examples.

\begin{remark}
An issue that arises while marking cells for convection-dominated problems is that only a few mesh cells with a high error are marked, which deteriorates the algorithm's performance. To ensure that enough cells are marked, we follow the strategy prescribed in \cite[Sec.~4]{John00}.
\end{remark}

The quality of an estimator is usually judged by its global effectivity index that is given by,
$$
\eta_{\mathrm{eff}}=\frac{\eta}{\|u-u_h\|_a}.
$$
This index can be used to measure the quality of an estimator when the exact or a good approximation is known to the solution.

We note that we have the presence of certain constants in our estimators namely $C_I$ and $C_F$. We chose the value of these constants to be unity.

\begin{remark}
We have discussed two different strategies for finding a global upper bound for the AFC error in the energy norm. Further in this section we will refer to the residual-based estimator from Sec.~\ref{sec:upper_bound} as \emph{AFC-energy} technique and from Sec.~\ref{sec:afc_supg_est} as \emph{AFC-SUPG-energy} technique.
\end{remark}

\begin{remark}
One of the advantages of the nonlinear AFC schemes is that it produces a physically consistent solution. In the case of \cdr equations, it relates to the satisfaction of DMP. It has been noted in \cite{BJK16} that a sufficient condition for the satisfaction of DMP for the Kuzmin limiter is that the mesh is Delaunay in nature. With red-green refinements, subsequent refinement makes the mesh lose this property. One way around this is to use grids with hanging nodes. To the best of our knowledge, no theory or implementation has been suggested for continuous AFC schemes for steady-state \cdr equations in the context of hanging nodes.
\end{remark}

Numerical studies presented further in this section will comprehend the results for the two different techniques on the following conditions:
\begin{enumerate}
\item Compare the \afce and \afcse techniques:
\begin{enumerate}
\item with respect to the effectivity index in the energy norm.
\item with respect to adaptive grid refinement.
\end{enumerate}
\item Study the behavior of $\eta_{d_h}$ defined in Eq.~\eqref{eq:edge_residual}, on uniformly and adaptively refined grids.
\item Study the behavior of $\eta_{\mathrm{SUPG}}$ and $\eta_{\mathrm{AFC-SUPG}}$ for the \afcse technique.
\end{enumerate}

\begin{remark}
A comparative study for the solution of the nonlinear problem arising in the AFC schemes was performed in \cite{JJ18, JJ19}. It was found that the simplest fixed point iteration scheme was the most efficient. We present a brief overview of this scheme. The matrix formulation for Eq.~\eqref{eq:afc_def} is given by
\begin{equation}\label{eq:matrix_formulation}
Au+(I-\boldsymbol{\alpha})Du=F,
\end{equation}
where $A(=\lbrace a_{ij}\rbrace_{i,j=1}^N)$ is the stiffness matrix, $D(=\lbrace d_{ij}\rbrace_{i,j=1}^N)$ is the artificial diffusion matrix, $I$ is the identity matrix of size $N\times N$, $\boldsymbol{\alpha}(=\lbrace \alpha_{ij}\rbrace_{i,j=1}^N)$ is the limiter matrix, and $F$ is the right-hand side. Then one can re-write Eq.~\eqref{eq:matrix_formulation} and compute the next iterative solution as
$$
\left( A+D\right)u^{\nu+1}=F+\omega\boldsymbol{\alpha} Du^{\nu},
$$
where $\nu$ is the $\nu^{\mathrm{th}}$ iterative step and $\omega\in \mathbb{R}^+$ is a damping parameter. The matrix $A+D$ is a constant matrix, and hence for an iterative process, it can be factored once and can be used again in the iterative loop. A detailed overview can be found in \cite{JJ19} where it is referred to as fixed-point right-hand side. We will use this method for solving the nonlinear problems arising in our numerical simulations.
\end{remark}

The matrices were assembled exactly, and the linear systems were solved using the direct solver \textsc{UMFPACK}~\cite{Dav04}. The stopping criteria for the adaptive algorithm were either number of degrees of freedom $(\#\mathrm{dof})\gtrsim 10^6$ or $\eta<10^{-3}$. All the simulations were performed with the in-house code \textsc{ParMooN} \cite{WB16}.

\subsection{A Known 2d Solution with a Boundary Layer}\label{ex:known_2d_boundary}
This example was proposed in \cite[Example~1]{ABR17}. Consider $\varepsilon=10^{-3}$, $\bb=(2,1)^T$, $c=1$, $g=0$, $u_D=0$, and the right-hand side $f$ such that the exact solution is given by
$$
u(x,y)=y(1-y)\left( x-\frac{e^{(x-1)/\varepsilon}-e^{-1/\varepsilon}}{1-e^{-1/\varepsilon}}\right),
$$
on the domain $\Omega=(0,1)^2$ (see Fig.~\ref{fig:example_2d_known_sol}). An initial grid was defined with two triangles by joining the points $(0,0)$ and $(1,1)$. The simulations were started with a level 2 grid (i.e., $\# \mathrm{dof}=25$), initially uniform refinement was performed till level 4 (i.e., $\# \mathrm{dof}=289$). After that adaptive refinement was performed.

\begin{figure}[t!]
\centerline{\includegraphics[width=0.48\textwidth]{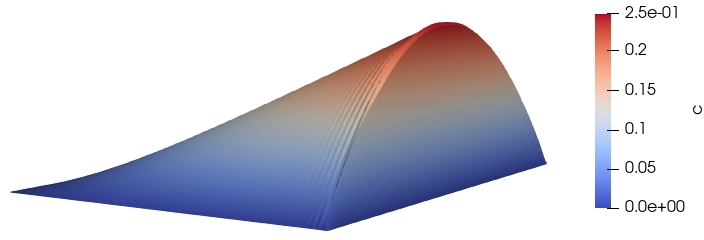}}
\caption{2d Boundary layer example. Solution (computed with the BJK limiter, level~7).}
\label{fig:example_2d_known_sol}
\end{figure}

First, we compare the behavior of effectivity indices for the \afce and \afcse techniques. For the \afce technique, we note that as the adaptive refinement starts, the effectivity index is high, and as the grid becomes refined the value decreases (see Fig.~\ref{fig:effectivity_index_2d_sol_boundary_layer} (left)). For the Kuzmin limiter on grids with fine adaptive regions $\eta_{\mathrm{eff}}\approx 232$ and for the BJK limiter $\eta_{\mathrm{eff}}\approx 12$. For the \afcse technique the values of the effectivity index are better than for the \afce technique (see Fig.~\ref{fig:effectivity_index_2d_sol_boundary_layer} (right)). One interesting observation to make is that the limiter does not play an important role in this technique. The values of effectivity indices are comparable for both the limiters. If the adaptive refinement is sufficiently fine, then for the Kuzmin limiter $\eta_{\mathrm{eff}}\approx 2$ and for the BJK limiter $\eta_{\mathrm{eff}}\approx 5$. 

\begin{figure}[t!]
\centerline{\begin{tikzpicture}[scale=0.75]
\begin{semilogxaxis}[
    legend pos=north east, xlabel = $\#\ \mathrm{dof}$, ylabel = $\eta_{\mathrm{eff}}$,
    legend cell align ={left}, title = {$\varepsilon=10^{-3}$}]
\addplot[color=red,  mark=oplus*, line width= 0.5mm,
dashdotted, mark options={scale=1.5,solid}] 
coordinates{(25.0, 157.798)( 81.0 , 152.499 )( 289.0 , 182.853 )( 344.0 , 1005.33 )( 435.0 , 866.463 )( 638.0 , 615.963 )( 1045.0 , 430.571 )( 1748.0 , 348.854 )( 2434.0 , 313.354 )( 3266.0 , 262.103 )( 3913.0 , 228.23 )( 5601.0 , 248.755 )( 6869.0 , 233.045 )( 10403.0 , 214.093 )( 13793.0 , 214.449 )( 19764.0 , 251.654 )( 22948.0 , 228.084 )( 27597.0 , 225.452 )( 34830.0 , 213.521 )( 43917.0 , 195.888 )( 54808.0 , 172.266 )( 67015.0 , 148.108 )( 80566.0 , 189.598 )( 104171.0 , 241.836 )( 124036.0 , 231.875 )( 143828.0 , 211.424 )( 177722.0 , 232.638 )( 210148.0 , 234.948 )( 247895.0 , 227.263 )( 294562.0 , 229.086 )( 353877.0 , 233.405 )( 422554.0 , 232.132 )( 505097.0 , 217.472 )( 623392.0 , 219.318 )( 791868.0 , 230.286 )( 974292.0 , 244.025 )( 1214238.0 , 232.149 )};
\addlegendentry{Kuzmin (Adaptive)} 
\addplot[color=blue,  mark=diamond*, line width= 0.5mm,
dashdotted, mark options={scale=1.5,solid}] 
coordinates{(25.0, 139.401)( 81.0 , 141.525 )( 289.0 , 170.581 )( 344.0 , 941.545 )( 423.0 , 824.347 )( 602.0 , 594.897 )( 950.0 , 413.008 )( 1236.0 , 359.739 )( 1723.0 , 267.727 )( 2664.0 , 191.521 )( 3147.0 , 143.894 )( 3728.0 , 69.0925 )( 5346.0 , 29.1357 )( 8415.0 , 21.2833 )( 10196.0 , 15.6269 )( 15483.0 , 12.4723 )( 22889.0 , 12.0138 )( 30350.0 , 11.997 )( 52411.0 , 11.9203 )( 70354.0 , 11.8922 )( 101062.0 , 11.9594 )( 154071.0 , 12.004 )( 213156.0 , 11.9671 )( 310335.0 , 11.8981 )( 391230.0 , 11.8986 )( 555210.0 , 11.9235 )( 759787.0 , 11.9572 )( 1011900.0 , 11.9926 )};
\addlegendentry{BJK (Adaptive)} 
\addplot[color=black,  mark=square*, line width= 0.5mm,
dashdotted, mark options={scale=1.5,solid}] 
coordinates{(25.0, 157.798)( 81.0 , 152.499 )( 289.0 , 182.853 )( 1089.0 , 258.84 )( 4225.0 , 219.16 )( 16641.0 , 150.702 )( 66049.0 , 103.2 )( 263169.0 , 79.311 )( 1050625.0 , 55.743 )};
\addlegendentry{Kuzmin (Uniform)}
\addplot[samples= 100000, color=dark_green,  mark=triangle*, line width= 0.5mm,
dashdotted, mark options={scale=1.5,solid}] 
coordinates{(25.0, 139.401)( 81.0 , 141.525 )( 289.0 , 170.581 )( 1089.0 , 244.127 )( 4225.0 , 210.508 )( 16641.0 , 144.71 )( 66049.0 , 93.8956 )( 263169.0 , 56.0329 )( 1050625.0 , 11.8521 )};
\addlegendentry{BJK (Uniform)} 
\end{semilogxaxis}
\end{tikzpicture}\hspace*{1em}
\begin{tikzpicture}[scale=0.75]
\begin{semilogxaxis}[
    legend pos=north east, xlabel = $\#\ \mathrm{dof}$, ylabel = $\eta_{\mathrm{eff}}$,
    legend cell align ={left},, title = {$\varepsilon=10^{-3}$}]
\addplot[color=salmon,  mark=oplus*, line width = 0.5mm, dashdotted,, mark options = {scale= 1.5, solid}] 
coordinates{(25.0, 174.631)( 81.0 , 112.566 )( 289.0 , 82.4627 )( 352.0 , 59.7597 )( 463.0 , 44.7054 )( 646.0 , 34.7865 )( 1045.0 , 24.7433 )( 1686.0 , 20.8614 )( 2022.0 , 19.1367 )( 3641.0 , 20.6808 )( 7491.0 , 24.0577 )( 14424.0 , 29.0774 )( 39409.0 , 18.1864 )( 49670.0 , 11.3822 )( 61741.0 , 7.64922 )( 83693.0 , 4.68844 )( 111831.0 , 4.36345 )( 150840.0 , 4.52452 )( 201913.0 , 3.00038 )( 269999.0 , 2.95759 )( 340794.0 , 3.12834 )( 429740.0 , 3.38819 )( 606132.0 , 2.26246 )( 845005.0 , 2.2052 )( 1114438.0 , 2.27671 )};
\addlegendentry{Kuzmin (Adaptive)} 
\addplot[color=royal_blue,  mark=diamond*, line width = 0.5mm, dashdotted,, mark options = {scale= 1.5, solid}] 
coordinates{(25.0, 176.195)( 81.0 , 112.288 )( 289.0 , 82.2871 )( 352.0 , 59.6964 )( 463.0 , 44.675 )( 646.0 , 34.7757 )( 1045.0 , 24.8238 )( 1686.0 , 20.9518 )( 2022.0 , 19.2404 )( 3641.0 , 20.4637 )( 7491.0 , 24.6255 )( 14363.0 , 29.3947 )( 39473.0 , 18.7249 )( 50843.0 , 11.3671 )( 63542.0 , 7.29906 )( 87090.0 , 5.71582 )( 117904.0 , 5.62455 )( 156698.0 , 5.25799 )( 210502.0 , 5.25344 )( 312177.0 , 5.26304 )( 462254.0 , 5.18561 )( 880881.0 , 5.15083 )( 1401761.0 , 5.17831 )};
\addlegendentry{BJK (Adaptive)} 
\addplot[color=lighter_gray,  mark=square*, line width = 0.5mm, dashdotted,, mark options = {scale= 1.5, solid}] 
coordinates{(25.0, 176.195)( 81.0 , 112.566 )( 289.0 , 82.4627 )( 1089.0 , 59.7289 )( 4225.0 , 44.5568 )( 16641.0 , 33.8111 )( 66049.0 , 23.2371 )( 263169.0 , 18.1528 )( 1050625.0 , 19.3542 )};
\addlegendentry{Kuzmin (Uniform)} 
\addplot[color=green,  mark=triangle*, line width = 0.5mm, dashdotted,, mark options = {scale= 1.5, solid}] 
coordinates{(25.0, 174.631)( 81.0 , 112.288 )( 289.0 , 82.2871 )( 1089.0 , 59.6387 )( 4225.0 , 44.5321 )( 16641.0 , 33.8249 )( 66049.0 , 23.3362 )( 263169.0 , 18.2639 )( 1050625.0 , 18.9198 )};
\addlegendentry{BJK (Uniform)} 
\end{semilogxaxis}
\end{tikzpicture}}
\caption{Example~\ref{ex:known_2d_boundary}: Effectivity index in the energy norm with \afce technique defined in Sec.~\ref{sec:upper_bound} (left) and \afcse technique defined in Sec.~\ref{sec:afc_supg_est} (right).}\label{fig:effectivity_index_2d_sol_boundary_layer}
\end{figure}
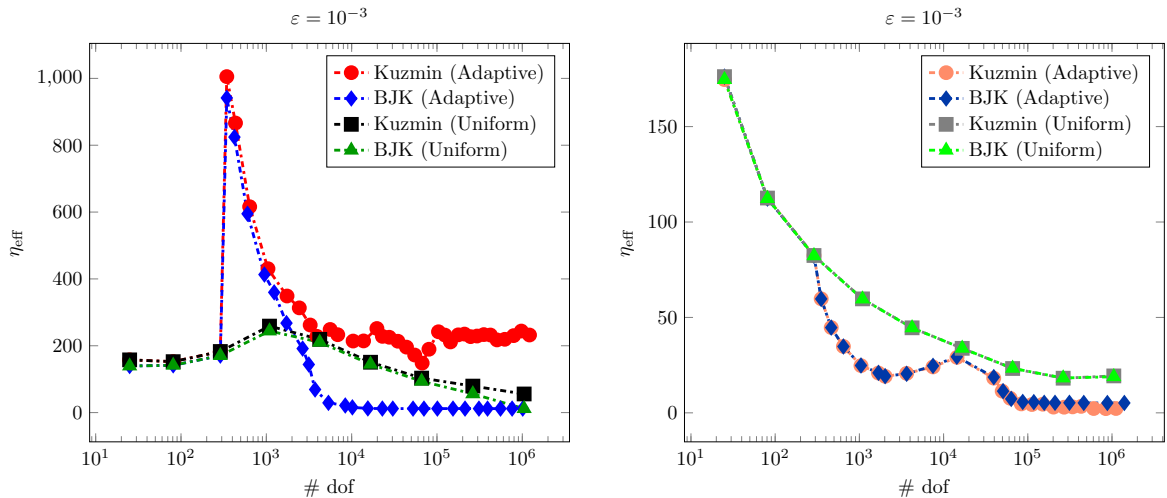

Next, we look at the individual behavior of $\eta_{\mathrm{SUPG}}$ and $\eta_{\mathrm{AFC-SUPG}}$. It can be seen in Fig.~\ref{fig:example_1_ABR17_supg_eta_afc_eta_supg_comp} that the dominating term is $\eta_{\mathrm{SUPG}}$ and hence, the AFC contribution, $\eta_{\mathrm{AFC-SUPG}}$ does not play a pivotal role in the effectivity index and the refinement of the grid.

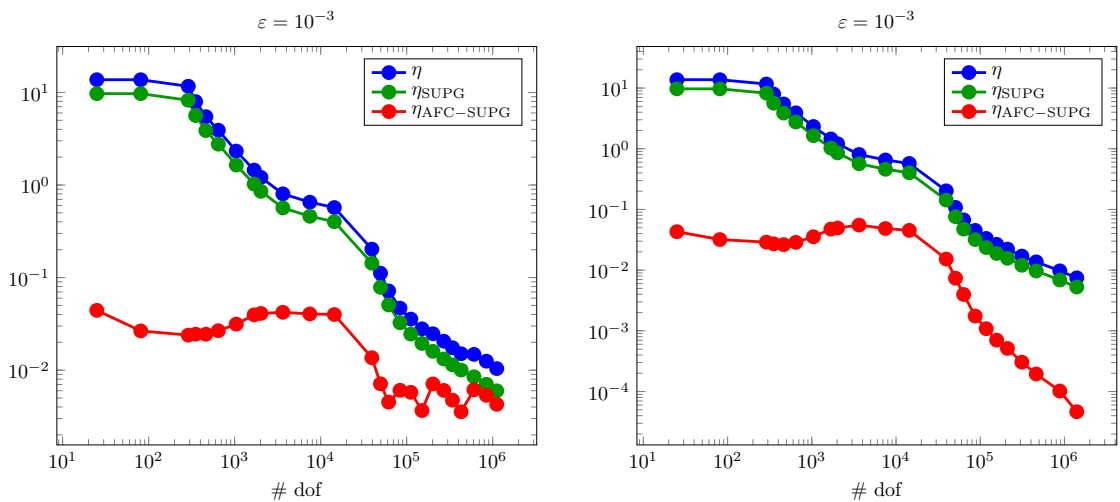
\begin{figure}[t!]
\centerline{\begin{tikzpicture}[scale=0.75]
\begin{loglogaxis}[
    legend pos=north east, xlabel = $\#\ \mathrm{dof}$,
    legend cell align ={left},, title = {$\varepsilon=10^{-3}$}]
\addplot[color=blue,  mark=oplus*, line width = 0.5mm, mark options = {scale= 1.5, solid}] 
coordinates{(25.0, 13.7241)( 81.0 , 13.7158 )( 289.0 , 11.6547 )( 352.0 , 7.96451 )( 463.0 , 5.47235 )( 646.0 , 3.90816 )( 1045.0 , 2.32506 )( 1686.0 , 1.44996 )( 2022.0 , 1.20874 )( 3641.0 , 0.802179 )( 7491.0 , 0.652303 )( 14424.0 , 0.57113 )( 39409.0 , 0.203022 )( 49670.0 , 0.111413 )( 61741.0 , 0.0717912 )( 83693.0 , 0.0467526 )( 111831.0 , 0.0356211 )( 150840.0 , 0.0279488 )( 201913.0 , 0.0247254 )( 269999.0 , 0.020554 )( 340794.0 , 0.0174715 )( 429740.0 , 0.0150476 )( 606132.0 , 0.0148084 )( 845005.0 , 0.0125027 )( 1114438.0 , 0.0104006 )};
\addlegendentry{$\eta$} 
\addplot[color=dark_green,  mark=oplus*, line width = 0.5mm, mark options = {scale= 1.5, solid}] 
coordinates{(25.0, 9.70433)( 81.0 , 9.69851 )( 289.0 , 8.24112 )( 352.0 , 5.6317 )( 463.0 , 3.86946 )( 646.0 , 2.76336 )( 1045.0 , 1.64376 )( 1686.0 , 1.02451 )( 2022.0 , 0.853725 )( 3641.0 , 0.565656 )( 7491.0 , 0.459467 )( 14424.0 , 0.401873 )( 39409.0 , 0.142911 )( 49670.0 , 0.078458 )( 61741.0 , 0.0505631 )( 83693.0 , 0.0324988 )( 111831.0 , 0.0245197 )( 150840.0 , 0.01942 )( 201913.0 , 0.0159777 )( 269999.0 , 0.013216 )( 340794.0 , 0.0114104 )( 429740.0 , 0.0100282 )( 606132.0 , 0.00848166 )( 845005.0 , 0.00704 )( 1114438.0 , 0.00598289 )};
\addlegendentry{$\eta_{\mathrm{SUPG}}$} 
\addplot[color=red,  mark=oplus*, line width = 0.5mm, mark options = {scale= 1.5, solid}] 
coordinates{(25.0, 0.0442584)( 81.0 , 0.0265415 )( 289.0 , 0.023912 )( 352.0 , 0.0244473 )( 463.0 , 0.0244581 )( 646.0 , 0.0266655 )( 1045.0 , 0.0314251 )( 1686.0 , 0.0395718 )( 2022.0 , 0.0409992 )( 3641.0 , 0.0421705 )( 7491.0 , 0.0404924 )( 14424.0 , 0.0399103 )( 39409.0 , 0.0136131 )( 49670.0 , 0.00712439 )( 61741.0 , 0.00451182 )( 83693.0 , 0.00606038 )( 111831.0 , 0.0057632 )( 150840.0 , 0.00366488 )( 201913.0 , 0.00709832 )( 269999.0 , 0.00604747 )( 340794.0 , 0.00473603 )( 429740.0 , 0.0035567 )( 606132.0 , 0.00614051 )( 845005.0 , 0.00534762 )( 1114438.0 , 0.00427678 )};
\addlegendentry{$\eta_{\mathrm{AFC-SUPG}}$} 
\end{loglogaxis}
\end{tikzpicture}\hspace*{1em}
\begin{tikzpicture}[scale=0.75]
\begin{loglogaxis}[
    legend pos=north east, xlabel = $\#\ \mathrm{dof}$,
    legend cell align ={left},, title = {$\varepsilon=10^{-3}$}]
\addplot[color=blue,  mark=oplus*, line width = 0.5mm, mark options = {scale= 1.5, solid}] 
coordinates{(25.0, 13.7241)( 81.0 , 13.7158 )( 289.0 , 11.6548 )( 352.0 , 7.96452 )( 463.0 , 5.47236 )( 646.0 , 3.90819 )( 1045.0 , 2.32517 )( 1686.0 , 1.45043 )( 2022.0 , 1.20937 )( 3641.0 , 0.80375 )( 7491.0 , 0.653383 )( 14363.0 , 0.57218 )( 39473.0 , 0.202575 )( 50843.0 , 0.108042 )( 63542.0 , 0.0672869 )( 87090.0 , 0.0451663 )( 117904.0 , 0.0333104 )( 156698.0 , 0.0266201 )( 210502.0 , 0.02221 )( 312177.0 , 0.0170385 )( 462254.0 , 0.0136912 )( 880881.0 , 0.00972972 )( 1401761.0 , 0.0074619 )};
\addlegendentry{$\eta$} 
\addplot[color=dark_green,  mark=oplus*, line width = 0.5mm, mark options = {scale= 1.5, solid}] 
coordinates{(25.0, 9.70453)( 81.0 , 9.69851 )( 289.0 , 8.24112 )( 352.0 , 5.6317 )( 463.0 , 3.86946 )( 646.0 , 2.76336 )( 1045.0 , 1.64376 )( 1686.0 , 1.02451 )( 2022.0 , 0.853723 )( 3641.0 , 0.565654 )( 7491.0 , 0.459463 )( 14363.0 , 0.402057 )( 39473.0 , 0.14243 )( 50843.0 , 0.076041 )( 63542.0 , 0.0474127 )( 87090.0 , 0.0318893 )( 117904.0 , 0.023529 )( 156698.0 , 0.0188101 )( 210502.0 , 0.0156965 )( 312177.0 , 0.0120442 )( 462254.0 , 0.00967919 )( 880881.0 , 0.0068792 )( 1401761.0 , 0.00527616 )};
\addlegendentry{$\eta_{\mathrm{SUPG}}$} 
\addplot[color=red,  mark=oplus*, line width = 0.5mm, mark options = {scale= 1.5, solid}] 
coordinates{(25.0, 0.0429842)( 81.0 , 0.0319308 )( 289.0 , 0.0288995 )( 352.0 , 0.0271393 )( 463.0 , 0.0262942 )( 646.0 , 0.0286865 )( 1045.0 , 0.0354736 )( 1686.0 , 0.0474512 )( 2022.0 , 0.0494385 )( 3641.0 , 0.0551615 )( 7491.0 , 0.0484658 )( 14363.0 , 0.0452204 )( 39473.0 , 0.0152351 )( 50843.0 , 0.00736799 )( 63542.0 , 0.00397464 )( 87090.0 , 0.00175224 )( 117904.0 , 0.0010843 )( 156698.0 , 0.000702938 )( 210502.0 , 0.00051347 )( 312177.0 , 0.000304677 )( 462254.0 , 0.000194678 )( 880881.0 , 0.000101997 )( 1401761.0 , 4.6501e-05 )};
\addlegendentry{$\eta_{\mathrm{AFC-SUPG}}$} 
\end{loglogaxis}
\end{tikzpicture}}
\caption{Example~\ref{ex:known_2d_boundary}: Comparison of $\eta_{\mathrm{SUPG}}$ and $\eta_{\mathrm{AFC-SUPG}}$ for \afcse technique. Kuzmin limiter (left) and BJK limiter (right).}\label{fig:example_1_ABR17_supg_eta_afc_eta_supg_comp}
\end{figure}

Then, we study the behavior of the error in the energy norm, its relation to the a posteriori error estimates, and the behavior of the part $\eta_{d_h}$ of the error estimators in some detail. One can observe that $\|u-u_h\|_a$, $\eta_{d_h}$, and $\eta$ for the \afce technique decay optimally on adaptive grids for the BJK limiter (see Fig.~\ref{fig:error_energy_norm_example_2d_sol_boundary_layer_afc} (left)). For the Kuzmin limiter one observes that as the grid becomes fine, the optimal rate is not obtained for $\|u-u_h\|_a$, $\eta_{d_h}$ and $\eta$. It has been noted in \cite[Remark~18]{BJK16} that if the grid is non-Delaunay and the problem becomes diffusion-dominated then the AFC method with the Kuzmin limiter fails to converge. With successive refinement of the grid, the problem becomes locally diffusion-dominated (in the sense of a small grid Peclet number) and one has to expect, because of the conforming closure and the resulting obtuse angles, that there is no convergence. The error estimator with the \afce technique predicts this irregular behavior of the error. This reduction of the rate of convergence is not observed while using BJK limiter. 

We also note that for the Kuzmin limiter, $\eta_{d_h}$ is comparable with $\eta$ and hence is the leading term in the adaptive refinement of the grid. For the BJK limiter, as the grid becomes finer, $\eta_{d_h}$ is small as compared to $\eta$.

\revi{After studying the behavior of the errors we comment on the behavior of the effectivity index presented in Fig.~\ref{fig:effectivity_index_2d_sol_boundary_layer}. We note that the effectivity index for the adaptive approach is better for
the BJK limiter from around $3000$ degrees of freedom. It is only worse
for coarse grids. Fig.~\ref{fig:error_energy_norm_example_2d_sol_boundary_layer_afc} (right) shows the errors on both the uniform and adaptive grids. In Fig.~\ref{fig:error_energy_norm_example_2d_sol_boundary_layer_afc} (right) it’s clear that the errors on the adaptive grids are smaller. For the Kuzmin limiter, the effectivity index on the adaptive grid is always larger than on the uniform grid. It reflects very well that the method does not converge. Comparing Fig.~\ref{fig:effectivity_index_2d_sol_boundary_layer} (left) and Fig.~\ref{fig:error_energy_norm_example_2d_sol_boundary_layer_afc} (left) one can note that the effectivity index on the adaptive grid is always larger if $\eta_{d_h}$ dominates the error estimates. Thus, one can guess that $\eta_{d_h}$ might lead to a stronger overestimate of the error on adaptive grids than on uniform grids, i.e., $\kappa_1$ and $\kappa_2$ (see Eq.~\eqref{eq:global_upper_bound}) might be more accurate approximations on uniform grids where all mesh cells are identical.}

\begin{figure}[t!]
\centerline{\begin{tikzpicture}[scale=0.75]
\begin{loglogaxis}[
    legend pos=south west, xlabel = $\#\ \mathrm{dof}$,
    legend cell align ={left},, title = {$\varepsilon=10^{-3}$},
    legend style={nodes={scale=0.75, transform shape}}]
\addplot[color=red,  mark=square*, line width = 0.5mm, dashdotted,,mark options = {scale= 1.0, solid}]
coordinates{(25.0, 0.07858936443668)( 81.0 , 0.122148359896 )( 289.0 , 0.141635427858 )( 344.0 , 0.133503427326 )( 423.0 , 0.123737496444 )( 602.0 , 0.113186520987 )( 950.0 , 0.0955276434282 )( 1236.0 , 0.0850302895705 )( 1723.0 , 0.0673164611738 )( 2664.0 , 0.0506826934128 )( 3147.0 , 0.043885675041 )( 3728.0 , 0.0382473606946 )( 5346.0 , 0.0368609571148 )( 8415.0 , 0.0259327038145 )( 10196.0 , 0.0221249203781 )( 15483.0 , 0.0169535474508 )( 22889.0 , 0.0127992815453 )( 30350.0 , 0.0107210025579 )( 52411.0 , 0.00812162742894 )( 70354.0 , 0.00679989946714 )( 101062.0 , 0.0055036592655 )( 154071.0 , 0.00451500782148 )( 213156.0 , 0.00378968793419 )( 310335.0 , 0.00306560521585 )( 391230.0 , 0.00272378607707 )( 555210.0 , 0.00231832831478 )( 759787.0 , 0.00196249536785 )( 1011900.0 , 0.00166935796313 )};
\addlegendentry{$\|u-u_h\|_a$ (BJK)} 
\addplot[color=blue,  mark=square*, line width = 0.5mm, dashdotted,,mark options = {scale= 1.0, solid}] 
coordinates{(25.0, 10.9555)( 81.0 , 17.287 )( 289.0 , 24.1603 )( 344.0 , 125.7 )( 423.0 , 102.003 )( 602.0 , 67.3343 )( 950.0 , 39.4537 )( 1236.0 , 30.5887 )( 1723.0 , 18.0224 )( 2664.0 , 9.70681 )( 3147.0 , 6.31488 )( 3728.0 , 2.64261 )( 5346.0 , 1.07397 )( 8415.0 , 0.551935 )( 10196.0 , 0.345744 )( 15483.0 , 0.21145 )( 22889.0 , 0.153768 )( 30350.0 , 0.12862 )( 52411.0 , 0.0968122 )( 70354.0 , 0.0808655 )( 101062.0 , 0.0658205 )( 154071.0 , 0.054198 )( 213156.0 , 0.0453515 )( 310335.0 , 0.036475 )( 391230.0 , 0.0324092 )( 555210.0 , 0.0276426 )( 759787.0 , 0.0234659 )( 1011900.0 , 0.02002 )};
\addlegendentry{$\eta$ (BJK)} 
\addplot[color=dark_green,  mark=square*, line width = 0.5mm, dashdotted,,mark options = {scale= 1.0, solid}] 
coordinates{(25.0, 10.2718)( 81.0 , 15.583 )( 289.0 , 22.4177 )( 344.0 , 125.398 )( 423.0 , 101.815 )( 602.0 , 67.2451 )( 950.0 , 39.4032 )( 1236.0 , 30.5509 )( 1723.0 , 17.9921 )( 2664.0 , 9.68029 )( 3147.0 , 6.28703 )( 3728.0 , 2.59964 )( 5346.0 , 0.976864 )( 8415.0 , 0.455636 )( 10196.0 , 0.222329 )( 15483.0 , 0.059851 )( 22889.0 , 0.0211256 )( 30350.0 , 0.0126068 )( 52411.0 , 0.00887979 )( 70354.0 , 0.00649381 )( 101062.0 , 0.00570253 )( 154071.0 , 0.00354036 )( 213156.0 , 0.00169654 )( 310335.0 , 0.00090679 )( 391230.0 , 0.000544933 )( 555210.0 , 0.00046401 )( 759787.0 , 0.000413572 )( 1011900.0 , 0.000251337 )};
\addlegendentry{$\eta_{d_h}$ (BJK)} 
\addplot[color=gold,  mark=oplus*, line width = 0.5mm, dashdotted,,mark options = {scale= 1.0, solid}]
coordinates{(25.0, 0.077891666488992)( 81.0 , 0.121846769481 )( 289.0 , 0.14133350854 )( 344.0 , 0.133315385795 )( 435.0 , 0.12273318655 )( 638.0 , 0.11229010666 )( 1045.0 , 0.0937867454382 )( 1748.0 , 0.0671275985206 )( 2434.0 , 0.0549707763066 )( 3266.0 , 0.041619680414 )( 3913.0 , 0.0370608336357 )( 5601.0 , 0.030861820373 )( 6869.0 , 0.0259637827384 )( 10403.0 , 0.0212224875198 )( 13793.0 , 0.0194389999956 )( 19764.0 , 0.0168472600542 )( 22948.0 , 0.0145585699812 )( 27597.0 , 0.0131627082309 )( 34830.0 , 0.0117222059116 )( 43917.0 , 0.0106538064624 )( 54808.0 , 0.00988919183729 )( 67015.0 , 0.00951595969239 )( 80566.0 , 0.0106263144336 )( 104171.0 , 0.0117884042022 )( 124036.0 , 0.0124765147484 )( 143828.0 , 0.00892467901198 )( 177722.0 , 0.00854716998355 )( 210148.0 , 0.00876603924271 )( 247895.0 , 0.00880931390225 )( 294562.0 , 0.00870837832573 )( 353877.0 , 0.00873623402415 )( 422554.0 , 0.00852636325256 )( 505097.0 , 0.00471629905189 )( 623392.0 , 0.00429238759794 )( 791868.0 , 0.00486206804552 )( 974292.0 , 0.0049507468671 )( 1214238.0 , 0.00516956170283 )};
\addlegendentry{$\|u-u_h\|_a$ (Kuzmin)} 
\addplot[color=navy_blue,  mark=oplus*, line width = 0.5mm, dashdotted,,mark options = {scale= 1.0, solid}] 
coordinates{(25.0, 12.2911)( 81.0 , 18.5816 )( 289.0 , 25.8432 )( 344.0 , 134.026 )( 435.0 , 106.344 )( 638.0 , 69.1666 )( 1045.0 , 40.3819 )( 1748.0 , 23.4178 )( 2434.0 , 17.2253 )( 3266.0 , 10.9086 )( 3913.0 , 8.45839 )( 5601.0 , 7.67703 )( 6869.0 , 6.05072 )( 10403.0 , 4.54359 )( 13793.0 , 4.16867 )( 19764.0 , 4.23968 )( 22948.0 , 3.32058 )( 27597.0 , 2.96756 )( 34830.0 , 2.50294 )( 43917.0 , 2.08695 )( 54808.0 , 1.70357 )( 67015.0 , 1.40939 )( 80566.0 , 2.01473 )( 104171.0 , 2.85086 )( 124036.0 , 2.893 )( 143828.0 , 1.88689 )( 177722.0 , 1.98839 )( 210148.0 , 2.05957 )( 247895.0 , 2.00203 )( 294562.0 , 1.99497 )( 353877.0 , 2.03908 )( 422554.0 , 1.97924 )( 505097.0 , 1.02566 )( 623392.0 , 0.941397 )( 791868.0 , 1.11967 )( 974292.0 , 1.20811 )( 1214238.0 , 1.20011 )};
\addlegendentry{$\eta$ (Kuzmin)} 
\addplot[color=purple,  mark=oplus*, line width = 0.5mm,dashdotted,, mark options = {scale= 1.0, solid}]
coordinates{(25.0, 11.6814)( 81.0 , 17.0304 )( 289.0 , 24.258 )( 344.0 , 133.749 )( 435.0 , 106.179 )( 638.0 , 69.0992 )( 1045.0 , 40.3521 )( 1748.0 , 23.4012 )( 2434.0 , 17.2135 )( 3266.0 , 10.899 )( 3913.0 , 8.44915 )( 5601.0 , 7.67003 )( 6869.0 , 6.04378 )( 10403.0 , 4.53711 )( 13793.0 , 4.16269 )( 19764.0 , 4.23531 )( 22948.0 , 3.31625 )( 27597.0 , 2.96348 )( 34830.0 , 2.49918 )( 43917.0 , 2.08325 )( 54808.0 , 1.69956 )( 67015.0 , 1.40502 )( 80566.0 , 2.01202 )( 104171.0 , 2.84929 )( 124036.0 , 2.89171 )( 143828.0 , 1.88529 )( 177722.0 , 1.98716 )( 210148.0 , 2.05849 )( 247895.0 , 2.00104 )( 294562.0 , 1.99413 )( 353877.0 , 2.03839 )( 422554.0 , 1.97872 )( 505097.0 , 1.02486 )( 623392.0 , 0.940574 )( 791868.0 , 1.11903 )( 974292.0 , 1.20756 )( 1214238.0 , 1.1996 )};
\addlegendentry{$\eta_{d_h}$ (Kuzmin)} 
\addplot[color=violet,  line width = 0.5mm, dashdotted,,mark options = {scale= 1.0, solid}]
coordinates{(25.0, 0.2)( 81.0 , 0.111111111111 )( 289.0 , 0.0588235294118 )( 344.0 , 0.0539163866017 )( 423.0 , 0.0486216638326 )( 602.0 , 0.040756957297 )( 950.0 , 0.0324442842262 )( 1236.0 , 0.0284440061994 )( 1723.0 , 0.0240911405462 )( 2664.0 , 0.0193746064573 )( 3147.0 , 0.0178259066765 )( 3728.0 , 0.016378044552 )( 5346.0 , 0.013676832331 )( 8415.0 , 0.0109011656695 )( 10196.0 , 0.00990341746674 )( 15483.0 , 0.00803660166676 )( 22889.0 , 0.00660977369547 )( 30350.0 , 0.0057401157793 )( 52411.0 , 0.00436806182759 )( 70354.0 , 0.00377012372505 )( 101062.0 , 0.00314561853888 )( 154071.0 , 0.0025476487422 )( 213156.0 , 0.00216596392537 )( 310335.0 , 0.00179508335809 )( 391230.0 , 0.00159876239741 )( 555210.0 , 0.00134205823153 )( 759787.0 , 0.00114723944509 )( 1011900.0 , 0.000994102582563 )};
\addlegendentry{Optimal rate $\mathcal{O}(h)$} 
\end{loglogaxis}
\end{tikzpicture}\hspace*{1em}
\begin{tikzpicture}[scale=0.75]
\begin{loglogaxis}[
    legend pos=south west, xlabel = $\#\ \mathrm{dof}$,
    legend cell align ={left},, title = {$\varepsilon=10^{-3}$},
    legend style={nodes={scale=0.75, transform shape}}]
\addplot[color=red,  mark=square*, line width = 0.5mm, dashdotted,,mark options = {scale= 1.5, solid}]
coordinates{(25.0, 0.07858936443668)( 81.0 , 0.122148359896 )( 289.0 , 0.141635427858 )( 1089.0 , 0.133469285398 )( 4225.0 , 0.122404454653 )( 16641.0 , 0.111686213105 )( 66049.0 , 0.0921912406045 )( 263169.0 , 0.062084993431 )( 1050625.0 , 0.0364598809841 )};
\addlegendentry{$\|u-u_h\|_a$ (BJK+Uniform)} 
\addplot[color=blue,  mark=square*, line width = 0.5mm, dashdotted,,mark options = {scale= 1.5, solid}] 
coordinates{(25.0, 0.07858936443668)( 81.0 , 0.122148359896 )( 289.0 , 0.141635427858 )( 344.0 , 0.133503427326 )( 423.0 , 0.123737496444 )( 602.0 , 0.113186520987 )( 950.0 , 0.0955276434282 )( 1236.0 , 0.0850302895705 )( 1723.0 , 0.0673164611738 )( 2664.0 , 0.0506826934128 )( 3147.0 , 0.043885675041 )( 3728.0 , 0.0382473606946 )( 5346.0 , 0.0368609571148 )( 8415.0 , 0.0259327038145 )( 10196.0 , 0.0221249203781 )( 15483.0 , 0.0169535474508 )( 22889.0 , 0.0127992815453 )( 30350.0 , 0.0107210025579 )( 52411.0 , 0.00812162742894 )( 70354.0 , 0.00679989946714 )( 101062.0 , 0.0055036592655 )( 154071.0 , 0.00451500782148 )( 213156.0 , 0.00378968793419 )( 310335.0 , 0.00306560521585 )( 391230.0 , 0.00272378607707 )( 555210.0 , 0.00231832831478 )( 759787.0 , 0.00196249536785 )( 1011900.0 , 0.00166935796313 )};
\addlegendentry{$\|u-u_h\|_a$ (BJK+Adaptive)} 
\addplot[color=gold,  mark=oplus*, line width = 0.5mm, dashdotted,,mark options = {scale= 1.5, solid}]
coordinates{(25.0, 0.077891666488992)( 81.0 , 0.121846769481 )( 289.0 , 0.14133350854 )( 1089.0 , 0.133267362871 )( 4225.0 , 0.122336162121 )( 16641.0 , 0.11173100268 )( 66049.0 , 0.0925789492918 )( 263169.0 , 0.0624279199852 )( 1050625.0 , 0.0355373089066 )};
\addlegendentry{$\|u-u_h\|_a$ (Kuzmin+Uniform)} 
\addplot[color=navy_blue,  mark=oplus*, line width = 0.5mm, dashdotted,,mark options = {scale= 1.5, solid}] 
coordinates{(25.0, 0.077891666488992)( 81.0 , 0.121846769481 )( 289.0 , 0.14133350854 )( 344.0 , 0.133315385795 )( 435.0 , 0.12273318655 )( 638.0 , 0.11229010666 )( 1045.0 , 0.0937867454382 )( 1748.0 , 0.0671275985206 )( 2434.0 , 0.0549707763066 )( 3266.0 , 0.041619680414 )( 3913.0 , 0.0370608336357 )( 5601.0 , 0.030861820373 )( 6869.0 , 0.0259637827384 )( 10403.0 , 0.0212224875198 )( 13793.0 , 0.0194389999956 )( 19764.0 , 0.0168472600542 )( 22948.0 , 0.0145585699812 )( 27597.0 , 0.0131627082309 )( 34830.0 , 0.0117222059116 )( 43917.0 , 0.0106538064624 )( 54808.0 , 0.00988919183729 )( 67015.0 , 0.00951595969239 )( 80566.0 , 0.0106263144336 )( 104171.0 , 0.0117884042022 )( 124036.0 , 0.0124765147484 )( 143828.0 , 0.00892467901198 )( 177722.0 , 0.00854716998355 )( 210148.0 , 0.00876603924271 )( 247895.0 , 0.00880931390225 )( 294562.0 , 0.00870837832573 )( 353877.0 , 0.00873623402415 )( 422554.0 , 0.00852636325256 )( 505097.0 , 0.00471629905189 )( 623392.0 , 0.00429238759794 )( 791868.0 , 0.00486206804552 )( 974292.0 , 0.0049507468671 )( 1214238.0 , 0.00516956170283 )};
\addlegendentry{$\|u-u_h\|_a$ (Kuzmin+Adaptive)} 
\addplot[color=violet,  line width = 0.5mm, dashdotted,,mark options = {scale= 1.5, solid}]
coordinates{(25.0, 0.2)( 81.0 , 0.111111111111 )( 289.0 , 0.0588235294118 )( 1089.0 , 0.030303030303 )( 4225.0 , 0.0153846153846 )( 16641.0 , 0.0077519379845 )( 66049.0 , 0.00389105058366 )( 263169.0 , 0.00194931773879 )( 1050625.0 , 0.000975609756098 )};
\addlegendentry{Optimal rate $\mathcal{O}(h)$} 
\end{loglogaxis}
\end{tikzpicture}}
\caption{Example~\ref{ex:known_2d_boundary}: Error in energy norm with \afce technique defined in Sec.~\ref{sec:upper_bound}. The line corresponding to $\eta$ (Kuzmin) is below $\eta_{d_h}$ (Kuzmin) in the left figure.}\label{fig:error_energy_norm_example_2d_sol_boundary_layer_afc}
\end{figure}
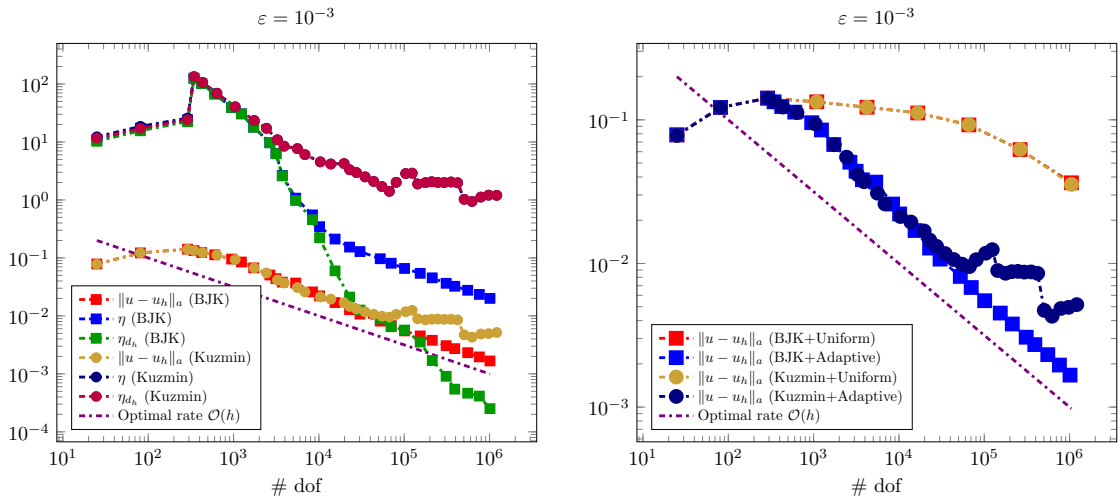

For the \afcse technique the error and $\eta$ values are shown in Fig.~\ref{fig:error_comparison_example_2d_sol_boundary_layer_afcse} (left). For the Kuzmin limiter, similar observation to the \afce technique can be made. One issue to note is that the estimator($\eta$) with \afcse technique does not predict the irregular behavior. It has already been mentioned that the AFC contribution does not play an important role here. \revi{Also, $\eta_{d_h}$ is absent from the \afcse technique. Hence, the effectivity
index is smaller on adaptive grids as compared to the uniform grids.}

Fig.~\ref{fig:adaptive_grids_example_1_ABR17} shows the $14^{\mathrm{th}}$ adaptively refined grid with \afce technique. One can observe obtuse angles in the adaptive grids.

\begin{figure}[t!]
\centerline{\begin{tikzpicture}[scale=0.75]
\begin{loglogaxis}[
    legend pos=south west, xlabel = $\#\ \mathrm{dof}$,
    legend cell align ={left},, title = {$\varepsilon=10^{-3}$},
    legend style={nodes={scale=0.75, transform shape}}]
\addplot[color=red,  mark=square*, line width = 0.5mm, dashdotted,,mark options = {scale= 1.0, solid}]
coordinates{(25.0, 0.07858936443668)( 81.0 , 0.122148359896 )( 289.0 , 0.141635427858 )( 352.0 , 0.133417158202 )( 463.0 , 0.122492774489 )( 646.0 , 0.112382684179 )( 1045.0 , 0.0936672027211 )( 1686.0 , 0.0692271478092 )( 2022.0 , 0.0628558741378 )( 3641.0 , 0.0392768611604 )( 7491.0 , 0.0265327634077 )( 14363.0 , 0.0194653971443 )( 39473.0 , 0.0108185060683 )( 50843.0 , 0.0095048163167 )( 63542.0 , 0.00921857759092 )( 87090.0 , 0.00790197230352 )( 117904.0 , 0.00592231489241 )( 156698.0 , 0.00506278044293 )( 210502.0 , 0.0042277082571 )( 312177.0 , 0.00323738248665 )( 462254.0 , 0.00264023149239 )( 880881.0 , 0.00188896077616 )( 1401761.0 , 0.0014409911019 )};
\addlegendentry{$\|u-u_h\|_a$ (BJK)} 
\addplot[color=blue,  mark=square*, line width = 0.5mm, dashdotted,,mark options = {scale= 1.0, solid}] 
coordinates{(25.0, 13.7241)( 81.0 , 13.7158 )( 289.0 , 11.6548 )( 352.0 , 7.96452 )( 463.0 , 5.47236 )( 646.0 , 3.90819 )( 1045.0 , 2.32517 )( 1686.0 , 1.45043 )( 2022.0 , 1.20937 )( 3641.0 , 0.80375 )( 7491.0 , 0.653383 )( 14363.0 , 0.57218 )( 39473.0 , 0.202575 )( 50843.0 , 0.108042 )( 63542.0 , 0.0672869 )( 87090.0 , 0.0451663 )( 117904.0 , 0.0333104 )( 156698.0 , 0.0266201 )( 210502.0 , 0.02221 )( 312177.0 , 0.0170385 )( 462254.0 , 0.0136912 )( 880881.0 , 0.00972972 )( 1401761.0 , 0.0074619 )};
\addlegendentry{$\eta$ (BJK)} 
\addplot[color=gold,  mark=oplus*, line width = 0.5mm, dashdotted,,mark options = {scale= 1.0, solid}]
coordinates{(25.0, 0.077891666488992)( 81.0 , 0.121846769481 )( 289.0 , 0.14133350854 )( 352.0 , 0.133275483089 )( 463.0 , 0.122409224994 )( 646.0 , 0.112346913957 )( 1045.0 , 0.0939670812613 )( 1686.0 , 0.0695044661202 )( 2022.0 , 0.0631634951785 )( 3641.0 , 0.0387885711783 )( 7491.0 , 0.0271141533233 )( 14424.0 , 0.0196417459922 )( 39409.0 , 0.0111634242719 )( 49670.0 , 0.0097883590876 )( 61741.0 , 0.00938542363591 )( 83693.0 , 0.00997187428367 )( 111831.0 , 0.00816352050847 )( 150840.0 , 0.00617719139133 )( 201913.0 , 0.00824075053185 )( 269999.0 , 0.0069495957833 )( 340794.0 , 0.0055849030632 )( 429740.0 , 0.00444119576172 )( 606132.0 , 0.00654528218349 )( 845005.0 , 0.00566963230724 )( 1114438.0 , 0.00456825081873 )};
\addlegendentry{$\|u-u_h\|_a$ (Kuzmin)} 
\addplot[color=navy_blue,  mark=oplus*, line width = 0.5mm, dashdotted,,mark options = {scale= 1.0, solid}] 
coordinates{(25.0, 13.7241)( 81.0 , 13.7158 )( 289.0 , 11.6547 )( 352.0 , 7.96451 )( 463.0 , 5.47235 )( 646.0 , 3.90816 )( 1045.0 , 2.32506 )( 1686.0 , 1.44996 )( 2022.0 , 1.20874 )( 3641.0 , 0.802179 )( 7491.0 , 0.652303 )( 14424.0 , 0.57113 )( 39409.0 , 0.203022 )( 49670.0 , 0.111413 )( 61741.0 , 0.0717912 )( 83693.0 , 0.0467526 )( 111831.0 , 0.0356211 )( 150840.0 , 0.0279488 )( 201913.0 , 0.0247254 )( 269999.0 , 0.020554 )( 340794.0 , 0.0174715 )( 429740.0 , 0.0150476 )( 606132.0 , 0.0148084 )( 845005.0 , 0.0125027 )( 1114438.0 , 0.0104006 )};
\addlegendentry{$\eta$ (Kuzmin)} 
\addplot[color=violet,  line width = 0.5mm, dashdotted,,mark options = {scale= 1.0, solid}]
coordinates{(25.0, 0.2)( 81.0 , 0.111111111111 )( 289.0 , 0.0588235294118 )( 352.0 , 0.0533001790889 )( 463.0 , 0.046473941234 )( 646.0 , 0.0393444737682 )( 1045.0 , 0.0309344112445 )( 1686.0 , 0.0243540512072 )( 2022.0 , 0.0222387014401 )( 3641.0 , 0.0165725623088 )( 7491.0 , 0.0115539398286 )( 14363.0 , 0.00834406002752 )( 39473.0 , 0.00503326657868 )( 50843.0 , 0.00443490596619 )( 63542.0 , 0.00396706722404 )( 87090.0 , 0.00338856526498 )( 117904.0 , 0.00291229745312 )( 156698.0 , 0.00252620315551 )( 210502.0 , 0.00217957534588 )( 312177.0 , 0.00178977957922 )( 462254.0 , 0.00147082045495 )( 880881.0 , 0.00106547037463 )( 1401761.0 , 0.000844623213998 )};
\addlegendentry{Optimal rate $\mathcal{O}(h)$} 
\end{loglogaxis}
\end{tikzpicture}\hspace*{1em}\begin{tikzpicture}[scale=0.75]
\begin{loglogaxis}[
    legend pos=south west, xlabel = $\#\ \mathrm{dof}$,
    legend cell align ={left},, title = {$\varepsilon=10^{-3}$},
    legend style={nodes={scale=0.75, transform shape}}]
\addplot[color=red,  mark=square*, line width = 0.5mm, dashdotted,,mark options = {scale= 1.5, solid}]
coordinates{(25.0, 0.07858936443668)( 81.0 , 0.122148359896 )( 289.0 , 0.141635427858 )( 1089.0 , 0.133469285398 )( 4225.0 , 0.122404454653 )( 16641.0 , 0.111686213105 )( 66049.0 , 0.0921912406045 )( 263169.0 , 0.062084993431 )( 1050625.0 , 0.0364598809841 )};
\addlegendentry{$\|u-u_h\|_a$ (BJK+Uniform)} 
\addplot[color=blue,  mark=square*, line width = 0.5mm, dashdotted,,mark options = {scale= 1.5, solid}] 
coordinates{(25.0, 0.07858936443668)( 81.0 , 0.122148359896 )( 289.0 , 0.141635427858 )( 352.0 , 0.133417158202 )( 463.0 , 0.122492774489 )( 646.0 , 0.112382684179 )( 1045.0 , 0.0936672027211 )( 1686.0 , 0.0692271478092 )( 2022.0 , 0.0628558741378 )( 3641.0 , 0.0392768611604 )( 7491.0 , 0.0265327634077 )( 14363.0 , 0.0194653971443 )( 39473.0 , 0.0108185060683 )( 50843.0 , 0.0095048163167 )( 63542.0 , 0.00921857759092 )( 87090.0 , 0.00790197230352 )( 117904.0 , 0.00592231489241 )( 156698.0 , 0.00506278044293 )( 210502.0 , 0.0042277082571 )( 312177.0 , 0.00323738248665 )( 462254.0 , 0.00264023149239 )( 880881.0 , 0.00188896077616 )( 1401761.0 , 0.0014409911019 )};
\addlegendentry{$\|u-u_h\|_a$ (BJK+Adaptive)} 
\addplot[color=gold,  mark=oplus*, line width = 0.5mm, dashdotted,,mark options = {scale= 1.5, solid}]
coordinates{(25.0, 0.077891666488992)( 81.0 , 0.121846769481 )( 289.0 , 0.14133350854 )( 1089.0 , 0.133267362871 )( 4225.0 , 0.122336162121 )( 16641.0 , 0.11173100268 )( 66049.0 , 0.0925789492918 )( 263169.0 , 0.0624279199852 )( 1050625.0 , 0.0355373089066 )};
\addlegendentry{$\|u-u_h\|_a$ (Kuzmin+Uniform)} 
\addplot[color=navy_blue,  mark=oplus*, line width = 0.5mm, dashdotted,,mark options = {scale= 1.5, solid}] 
coordinates{(25.0, 0.077891666488992)( 81.0 , 0.121846769481 )( 289.0 , 0.14133350854 )( 352.0 , 0.133275483089 )( 463.0 , 0.122409224994 )( 646.0 , 0.112346913957 )( 1045.0 , 0.0939670812613 )( 1686.0 , 0.0695044661202 )( 2022.0 , 0.0631634951785 )( 3641.0 , 0.0387885711783 )( 7491.0 , 0.0271141533233 )( 14424.0 , 0.0196417459922 )( 39409.0 , 0.0111634242719 )( 49670.0 , 0.0097883590876 )( 61741.0 , 0.00938542363591 )( 83693.0 , 0.00997187428367 )( 111831.0 , 0.00816352050847 )( 150840.0 , 0.00617719139133 )( 201913.0 , 0.00824075053185 )( 269999.0 , 0.0069495957833 )( 340794.0 , 0.0055849030632 )( 429740.0 , 0.00444119576172 )( 606132.0 , 0.00654528218349 )( 845005.0 , 0.00566963230724 )( 1114438.0 , 0.00456825081873 )};
\addlegendentry{$\|u-u_h\|_a$ (Kuzmin+Adaptive)} 
\addplot[color=violet,  line width = 0.5mm, dashdotted,,mark options = {scale= 1.5, solid}]
coordinates{(25.0, 0.2)( 81.0 , 0.111111111111 )( 289.0 , 0.0588235294118 )( 1089.0 , 0.030303030303 )( 4225.0 , 0.0153846153846 )( 16641.0 , 0.0077519379845 )( 66049.0 , 0.00389105058366 )( 263169.0 , 0.00194931773879 )( 1050625.0 , 0.000975609756098 )};
\addlegendentry{Optimal rate $\mathcal{O}(h)$} 
\end{loglogaxis}
\end{tikzpicture}}
\caption{Example~\ref{ex:known_2d_boundary}: Error in energy norm with \afcse technique defined in Sec.~\ref{sec:afc_supg_est}.}\label{fig:error_comparison_example_2d_sol_boundary_layer_afcse}
\end{figure}
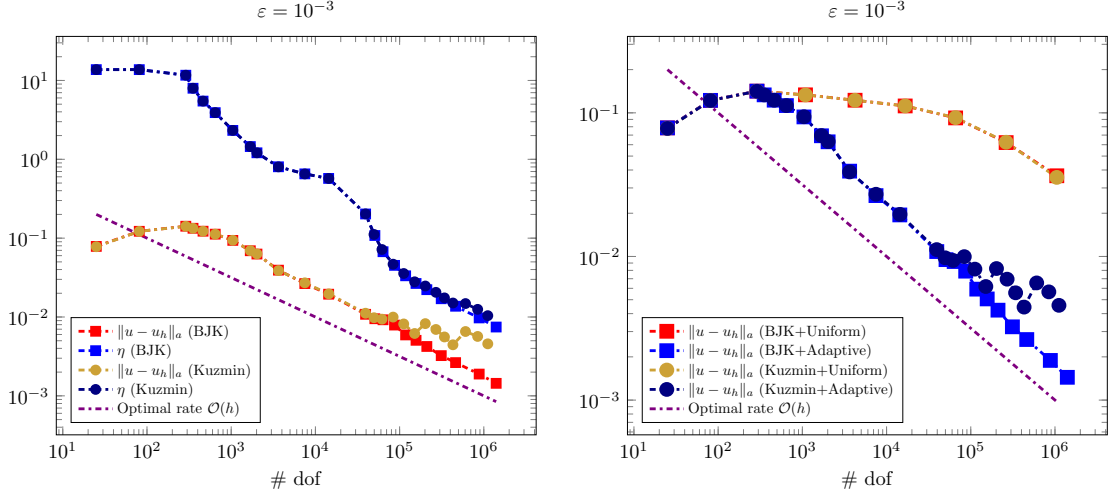


\begin{figure}[t!]
\centerline{\includegraphics[width=0.48\textwidth]{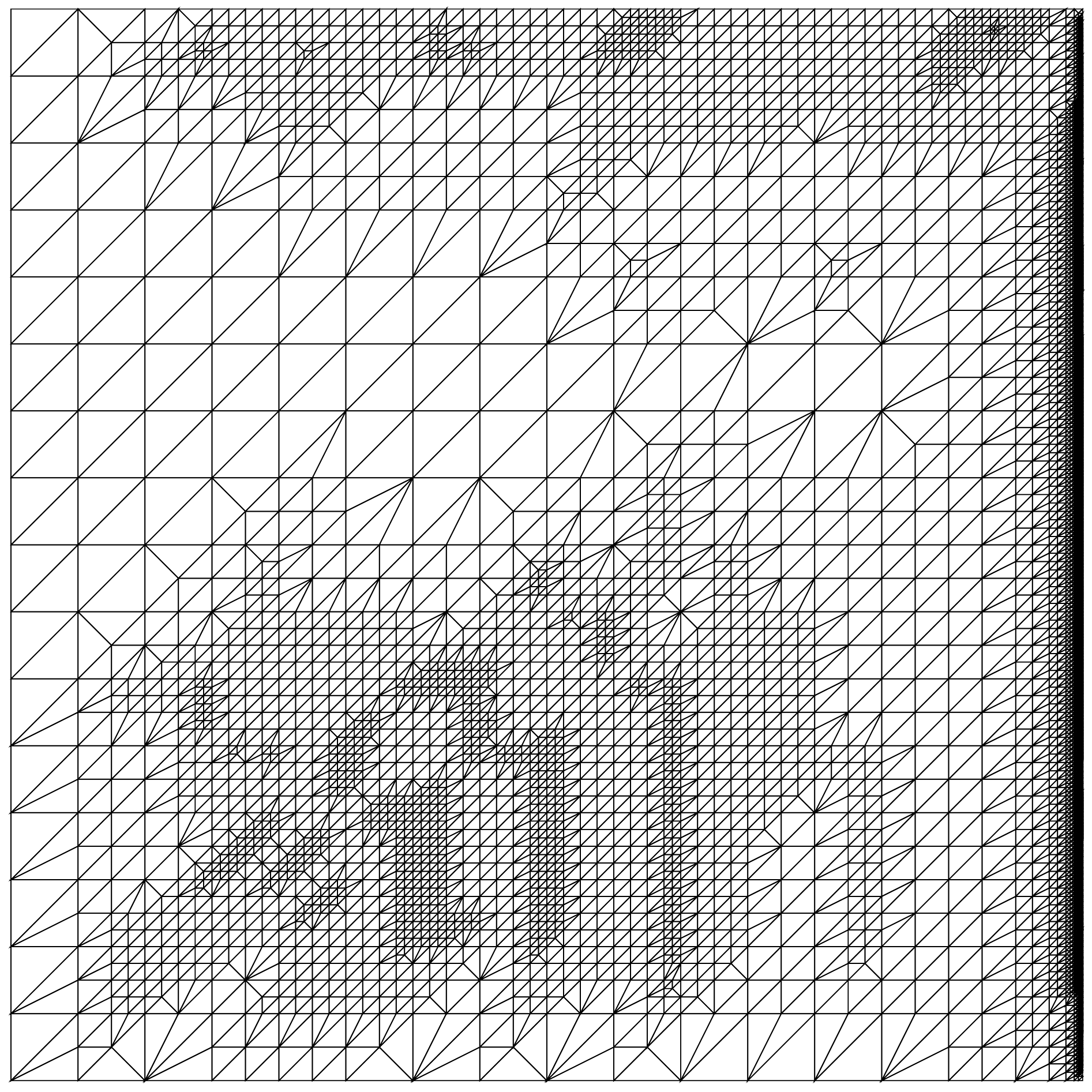}\hspace*{1em}
\includegraphics[width=0.48\textwidth]{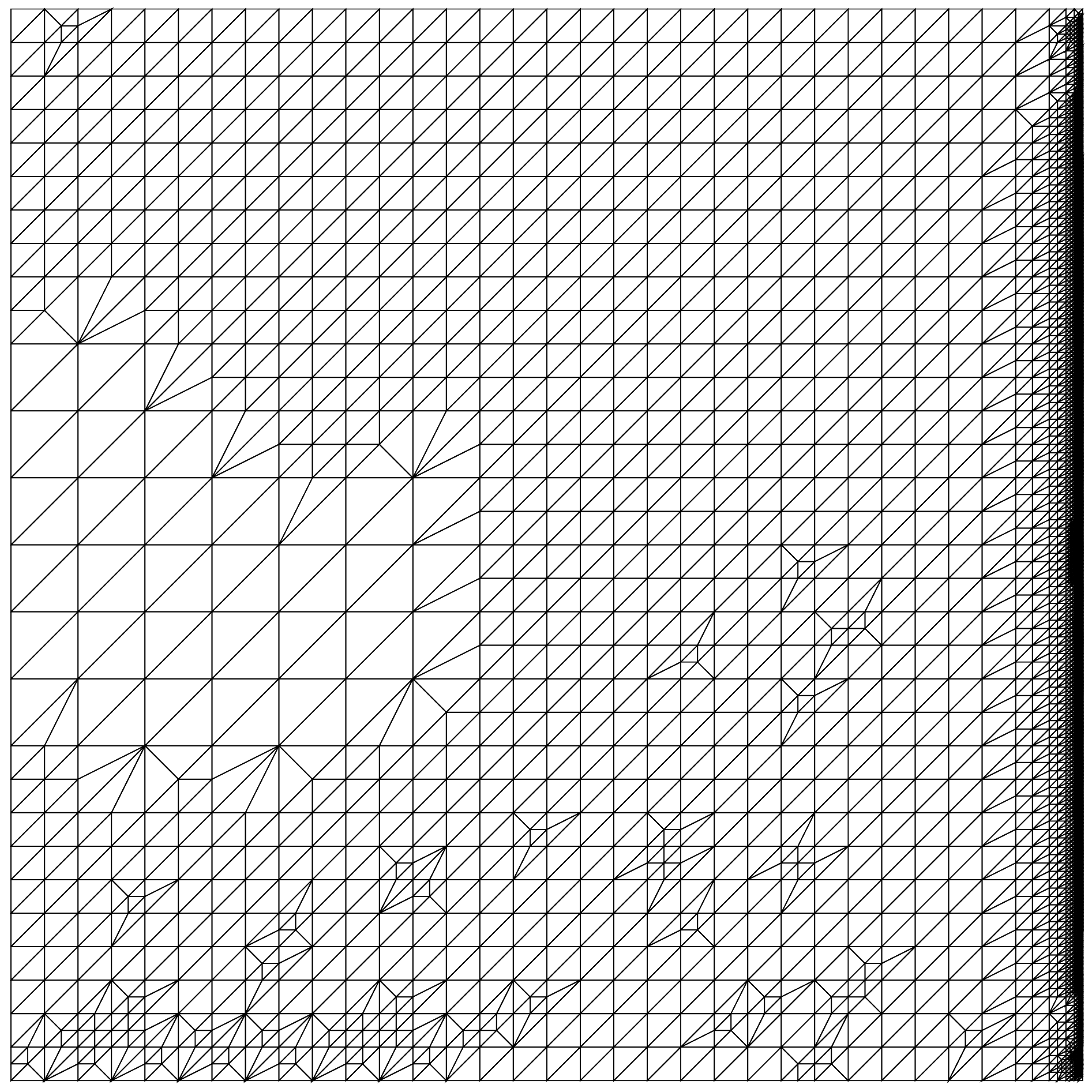}}
\caption{Example~\ref{ex:known_2d_boundary}: $14^{\mathrm{th}}$ adaptively refined grid with \afce technique. Kuzmin limiter ($\#\mathrm{dof}=22962$) (left) and BJK limiter ($\#\mathrm{dof}=23572$)(right).}\label{fig:adaptive_grids_example_1_ABR17}
\end{figure}

\subsection{Example with Interior and Boundary Layer}\label{ex:hmm86_post}
This example is proposed in \cite{HMM86}. It is given in $\Omega = (0,1)^2$ with 
$\bb = (\cos(-\pi/3),$ $ \sin(-\pi/3))$, $c=f=g=0$ and the Dirichlet boundary condition
$$
u_D = \begin{cases} 1 & (y=1\wedge x>0) \mbox{ or } (x=0 \wedge y>0.7),\\
0 & \mbox{else}.
\end{cases}
$$
Here, $\ep = 10^{-4}$ is considered. It is known that the solution exhibits an internal layer in the direction of the convection starting from the jump of the boundary condition at the left boundary and two exponential layers at the right and the lower boundary (see Fig.~\ref{fig:hmm86_sol_post}). A known solution to this problem is not available but we know that $u\in [0,1]$. 
This example serves for studying the adaptive grid refinement in the presence of different kinds of layers.

\begin{figure}[t!]
\centerline{\includegraphics[width=0.48\textwidth]{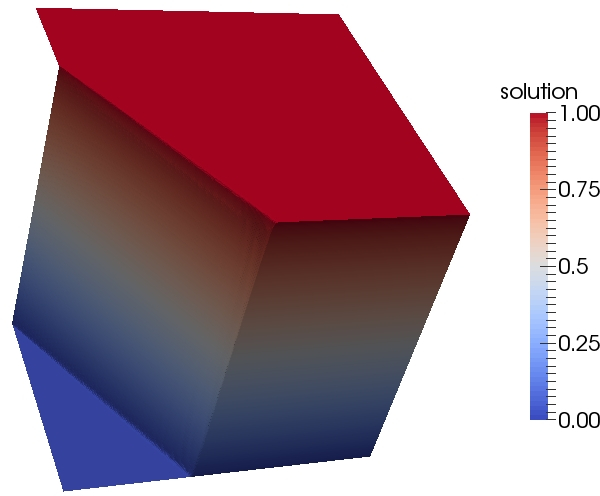}}
\caption{Example~\ref{ex:hmm86_post}. Solution (computed with the BJK limiter, level~9).}
\label{fig:hmm86_sol_post}
\end{figure}

An initial mesh was defined similar to the previous example, i.e., with two triangles by joining the points $(0,0)$ and $(1,1)$. The simulations were started with a level 2 grid (i.e., $\# \mathrm{dof}=25$), uniform refinement was performed till level 4 (i.e., $\# \mathrm{dof}=289$) and then the adaptive grid refinement was started. For this example, we do not have the presence of regions where the problem becomes locally diffusion-dominated because the refinement does not make the grid sufficiently fine for the considered diffusion parameter.

The $14^{\mathrm{th}}$ adaptively refined grids with conforming closure and \afce technique are shown in Fig.~\ref{fig:hmm86_conf_grid_jj20} for the Kuzmin limiter (left) and the BJK limiter (right), respectively. Here we see that we have the presence of non-Delaunay triangulation but we could note that the DMP was satisfied for both the limiters. This result shows that using the Kuzmin limiter might lead to solutions that satisfy the DMP even if an essential assumption of the analysis (Delaunay triangluation \cite[Remark~14]{BJK16}) is not satisfied. Comparing the refinement for both the limiters, we observe that the number of mesh cells is comparable for both the limiters (see Fig.~\ref{fig:hmm86_conf_grid_jj20} for $\#\mathrm{dof}$).

\begin{figure}[t!]
\centerline{\includegraphics[width=0.48\textwidth]{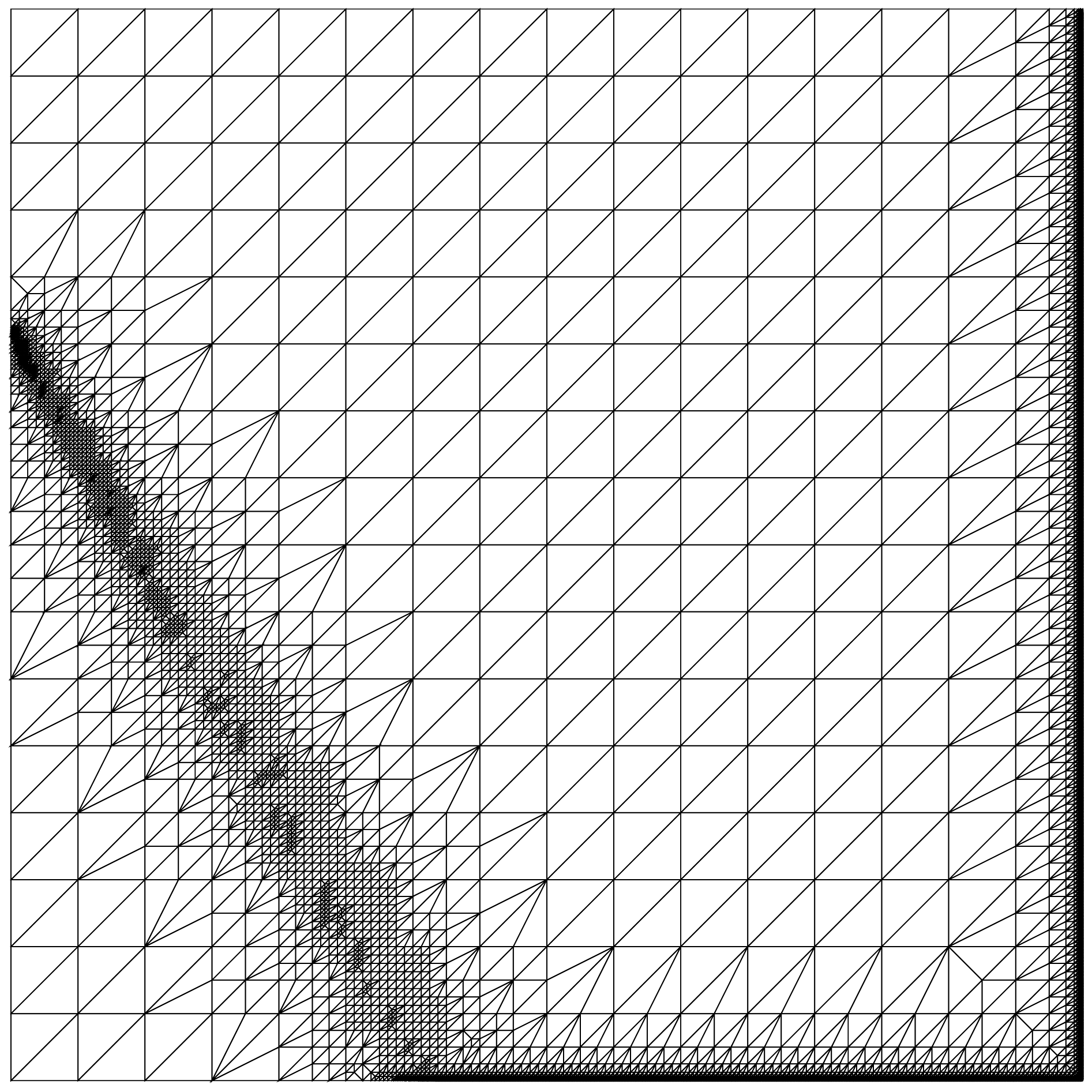}
\includegraphics[width=0.48\textwidth]{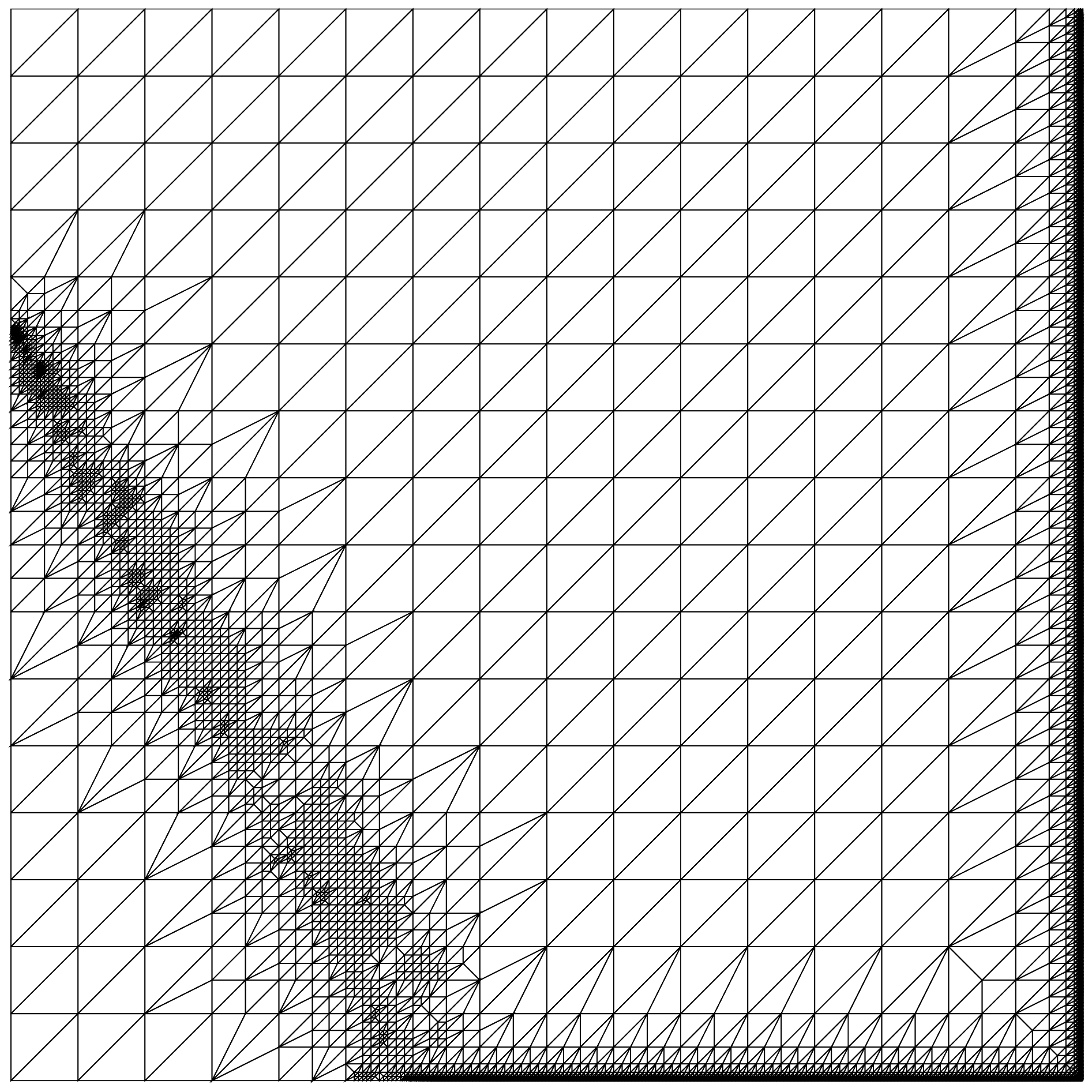}}

\caption{Example~\ref{ex:hmm86_post}: $14^{\mathrm{th}}$ adaptively refined grid with \afce technique and with conforming closure. Kuzmin limiter ($14^{\mathrm{th}}$ grid: $\#\mathrm{dof}=28548$ (left) and BJK limiter ($14^{\mathrm{th}}$ grid: $\#\mathrm{dof}=28120$) (right).}
\label{fig:hmm86_conf_grid_jj20}
\end{figure}

Next, we study the adaptive grid refinement for the \afcse technique. The $14^{\mathrm{th}}$ adaptively refine grids with conforming closure are shown in Fig.~\ref{fig:hmm86_conf_grid_supg} for the Kuzmin limiter (left) and the BJK limiter (right), respectively. Here we observe that the mesh cells near the internal layer are not refined that much as compared to the \afce technique. Also, we see that the limiters do not play an important role in the adaptive refinement. To be precise, the $\#\mathrm{dof}$ are comparable for both the limiters and the meshes look much more similar than in Fig.~\ref{fig:hmm86_conf_grid_jj20}.

\begin{figure}[t!]
\centerline{
\includegraphics[width=0.48\textwidth]{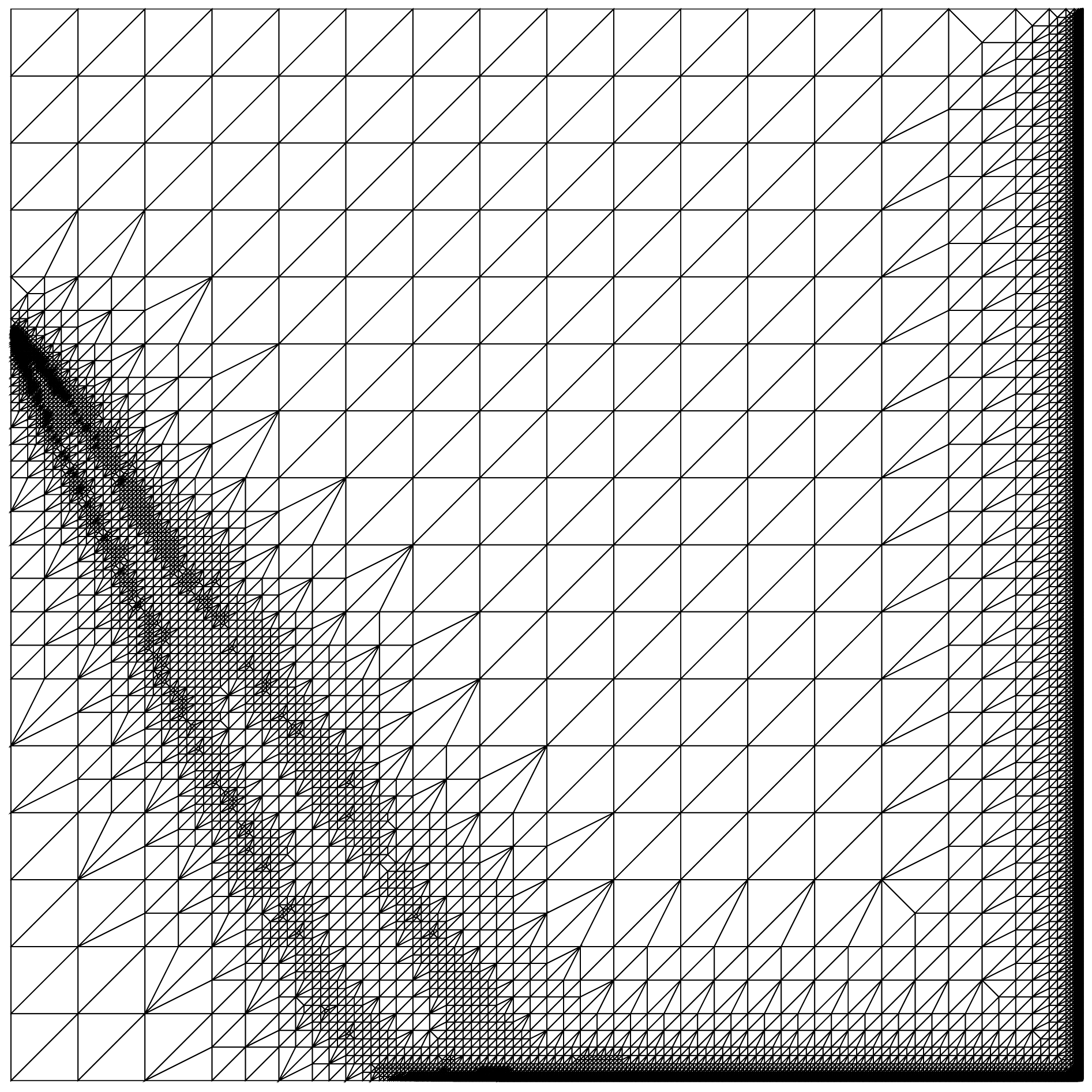}
\includegraphics[width=0.48\textwidth]{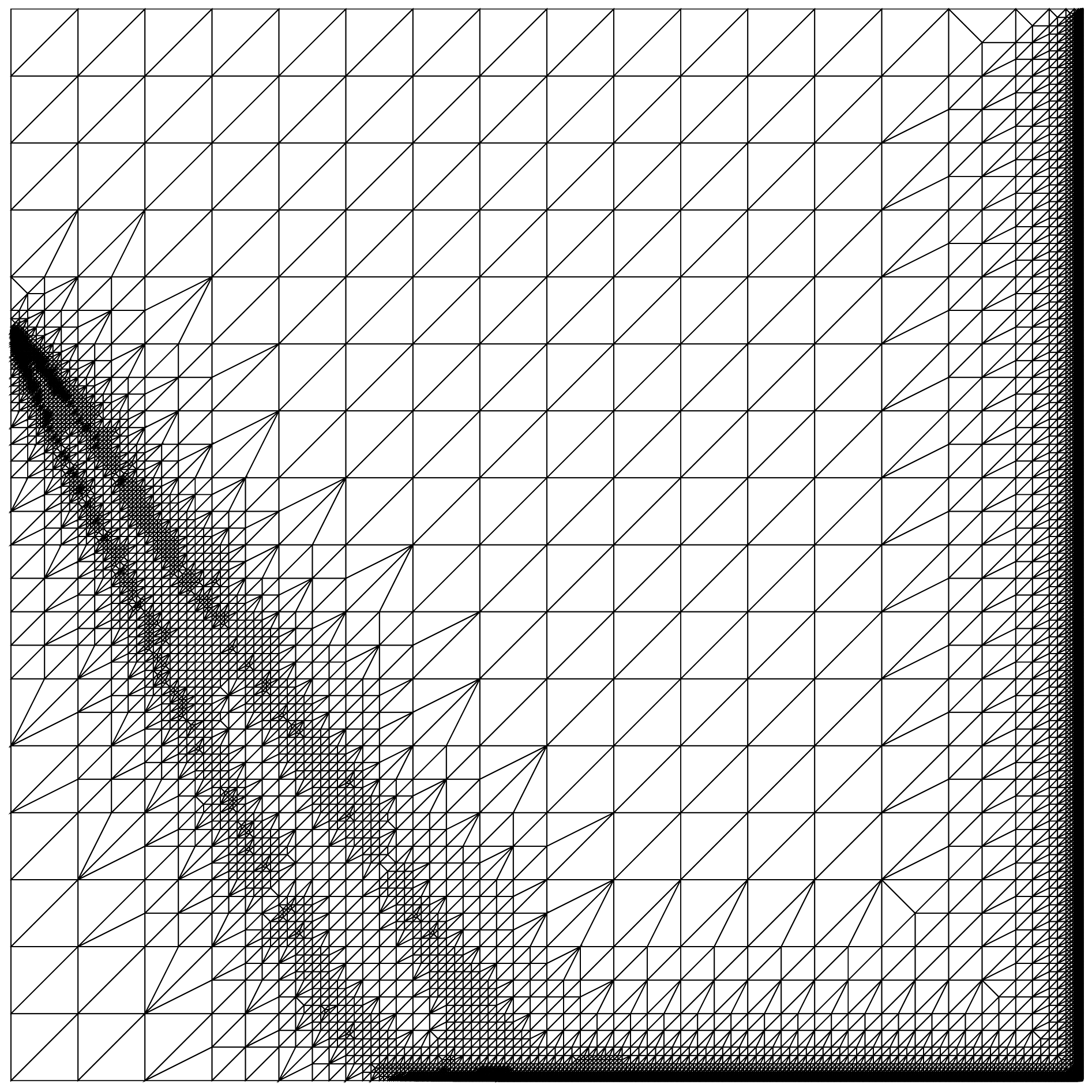}}

\caption{Example~\ref{ex:hmm86_post}: $14^{\mathrm{th}}$ adaptively refined grid with \afcse technique and with conforming closure. Kuzmin limiter ($14^{\mathrm{th}}$ grid: $\#\mathrm{dof}=100620$ (left) and BJK limiter ($14^{\mathrm{th}}$ grid: $\#\mathrm{dof}=100538$) (right).}
\label{fig:hmm86_conf_grid_supg}
\end{figure}

To check the thickness of the interior layer we follow the idea described in \cite[Eq.~(48)]{JK07_1}. We define
\begin{equation}\label{eq:smear}
smear_{\mathrm{int}}= x_2-x_1,
\end{equation}
where $x_1$ is the $x-$coordinate of the first point on the cut line $(x,0.25)$ with $u_h(x_1,0.25)\geq 0.1$ and $x_2$ is the $x-$coordinate of the first point with $u_h(x_1,0.25)\geq 0.9$. We note that in Fig.~\ref{fig:hmm86_thickness_layer}, the layers are most properly resolved for \afce technique as compared to the \afcse technique irrespective of the choice of limiters. 

\revi{One can observe in Fig.~\ref{fig:example_1_ABR17_supg_eta_afc_eta_supg_comp} that the \afcse estimator is dominated by $\eta_{\mathrm{SUPG}}$. If we look at the results from \cite{JN13}, where this estimator is proposed, specifically for \cite[Example~3]{JN13}, where the example has different kinds of layers (an exponential layer at the circle and parabolic (weaker) layers after the circle); it has been noted that the SUPG estimator refines mostly the strongest singularities, which for that example, is the exponential layer, and the weaker layers are not refined properly. This can be observed in Fig.~\ref{fig:hmm86_conf_grid_supg} as well; the SUPG estimator has
problems refining the parts of the grid with weaker singularities since the most effort of refinement goes in the stronger exponential layers at the boundary.}

Overall, for adaptive grid refinement, the \afce technique does a much better job since all layers are refined properly, not only the strongest layer. 

\begin{figure}[t!]
\centerline{\begin{tikzpicture}[scale=0.75]
\begin{loglogaxis}[
    legend pos=north east, xlabel = $\#\ \mathrm{dof}$, ylabel = $smear_{\mathrm{int}}$,
    legend cell align ={left},, title = {$\varepsilon=10^{-4}$},
    legend style={nodes={scale=0.75, transform shape}}]
\addplot[color=red,  mark=oplus*, line width = 0.5mm,dashdotted,, mark options = {scale= 1.5, solid}] 
coordinates{( 289.0 , 0.2026 )( 385.0 , 0.2026 )( 566.0 , 0.2026 )( 682.0 , 0.183 )( 962.0 , 0.183 )( 1246.0 , 0.1877 )( 1818.0 , 0.1763 )( 2273.0 , 0.1377 )( 3483.0 , 0.1081 )( 4754.0 , 0.0999 )( 6996.0 , 0.0814 )( 9554.0 , 0.0645 )( 14030.0 , 0.0572 )( 19325.0 , 0.0447 )( 28548.0 , 0.039 )( 39588.0 , 0.0351 )( 62033.0 , 0.0323 )( 91310.0 , 0.0316 )( 121576.0 , 0.031 )( 180498.0 , 0.0304 )( 219613.0 , 0.0305 )( 261517.0 , 0.0304 )( 334521.0 , 0.0305 )( 403679.0 , 0.0308 )( 464229.0 , 0.0306 )( 555817.0 , 0.0309 )( 766861.0 , 0.0308 )};
\addlegendentry{Kuzmin limiter (\afce technique)} 
\addplot[color=blue,  mark=oplus*, line width = 0.5mm,dashdotted,, mark options = {scale= 1.5, solid}] 
coordinates{( 289.0 , 0.2026 )( 397.0 , 0.1881 )( 579.0 , 0.1825 )( 879.0 , 0.1839 )( 1129.0 , 0.1829 )( 1772.0 , 0.1811 )( 2160.0 , 0.1836 )( 3703.0 , 0.1866 )( 7253.0 , 0.1122 )( 11142.0 , 0.105 )( 15890.0 , 0.1096 )( 32059.0 , 0.1002 )( 44438.0 , 0.0681 )( 66687.0 , 0.0666 )( 100620.0 , 0.0605 )( 150214.0 , 0.0515 )( 222308.0 , 0.0416 )( 295495.0 , 0.032 )( 416521.0 , 0.0327 )( 675783.0 , 0.0321 )( 941958.0 , 0.0322 )( 1392563.0 , 0.031 )};
\addlegendentry{Kuzmin limiter (\afcse technique)} 
\end{loglogaxis}
\end{tikzpicture}\hspace*{1em}
\begin{tikzpicture}[scale=0.75]
\begin{loglogaxis}[
    legend pos=north east, xlabel = $\#\ \mathrm{dof}$, ylabel = $smear_{\mathrm{int}}$,
    legend cell align ={left},, title = {$\varepsilon=10^{-4}$},
    legend style={nodes={scale=0.75, transform shape}}]
\addplot[color=red,  mark=oplus*, line width = 0.5mm,dashdotted,, mark options = {scale= 1.5, solid}] 
coordinates{( 289.0 , 0.1544 )( 385.0 , 0.1534 )( 452.0 , 0.1544 )( 691.0 , 0.131 )( 968.0 , 0.153 )( 1217.0 , 0.1475 )( 1775.0 , 0.1476 )( 2290.0 , 0.1396 )( 3419.0 , 0.1189 )( 4595.0 , 0.0888 )( 6895.0 , 0.0624 )( 9261.0 , 0.0606 )( 13650.0 , 0.0437 )( 18957.0 , 0.0396 )( 28120.0 , 0.0332 )( 38194.0 , 0.032 )( 59730.0 , 0.0303 )( 85976.0 , 0.0304 )( 98293.0 , 0.0304 )( 118413.0 , 0.0302 )( 147025.0 , 0.03 )( 183131.0 , 0.0301 )( 236401.0 , 0.0301 )( 308290.0 , 0.0302 )( 415735.0 , 0.0302 )};
\addlegendentry{BJK limiter (\afce technique)} 
\addplot[color=blue,  mark=oplus*, line width = 0.5mm,dashdotted,, mark options = {scale= 1.5, solid}] 
coordinates{( 289.0 , 0.1544 )( 397.0 , 0.1383 )( 583.0 , 0.1284 )( 879.0 , 0.1304 )( 1129.0 , 0.1234 )( 1774.0 , 0.1364 )( 2160.0 , 0.1269 )( 3703.0 , 0.1371 )( 7263.0 , 0.0808 )( 11138.0 , 0.079 )( 15900.0 , 0.08 )( 32083.0 , 0.071 )( 44444.0 , 0.0415 )( 66573.0 , 0.0456 )( 100538.0 , 0.0442 )( 150083.0 , 0.0408 )( 219751.0 , 0.0303 )( 299195.0 , 0.0301 )( 416711.0 , 0.0301 )( 654933.0 , 0.0301 )( 922309.0 , 0.0301 )};
\addlegendentry{BJK limiter (\afcse technique)} 
\end{loglogaxis}
\end{tikzpicture}}
\caption{Example~\ref{ex:hmm86_post}: Thickness of interior layer. Kuzmin limiter (left), BJK limiter (right).}
\label{fig:hmm86_thickness_layer}
\end{figure}
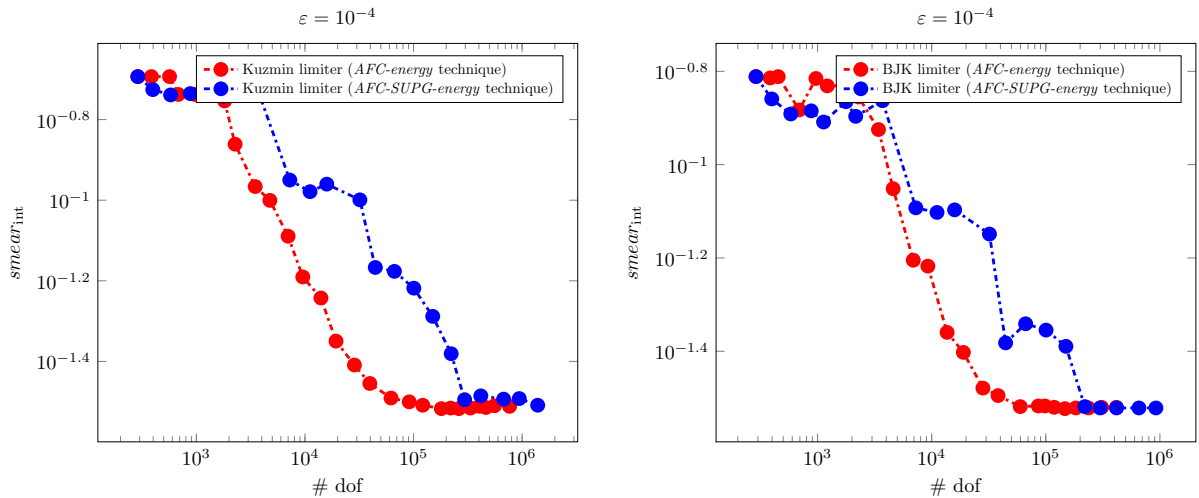

\section{Summary}
In this work, a new residual-based a posteriori error estimator has been derived in the energy norm for AFC schemes (\afce). Another approach for finding an upper bound in a posteriori way using the SUPG solution (\afcse) has also been discussed.

The following conclusions can be made from the numerical simulations. 
\begin{enumerate}
\item The effectivity index of the error estimator with \afce was not robust with respect to $\varepsilon$. The effectivity index was quite large for a strongly convection-dominated case, which eventually decreased as the mesh became finer. 
\item For the \afcse technique, the effectivity index was better than the \afce technique. 
\item The choice of limiter did not play an important role in \afcse technique as the dominating term was $\eta_{\mathrm{SUPG}}$. Because of this dominating nature, one gets very similar refined grids and effectivity indices for both the limiters.
\item For the Kuzmin limiter and the \afce technique, a reduced order of convergence can be observed with conforming closure using red-green refinements as adaptive refinement leads to locally diffusion-dominated problems. This kind of reduction of the order of convergence is not observed with the BJK limiter.
\item The AFC contribution $\eta_{d_h}$ is the dominating term in the estimator $\eta$ for the Kuzmin limiter. In contrast, it is the dominating term for the BJK limiter in the convection-dominated situation, but if the layer becomes to be resolved, then no longer.
\item With adaptive grid refinement, the problem could become locally diffusion-dominated. Then, one has to use the BJK limiter because, with the Kuzmin limiter, the error may become non-convergent. This situation might only happen if the diffusion coefficient is comparably large with respect to the mesh size.
\item For a small diffusion coefficient, one does not run into the previous point's issues. One has to use the Kuzmin limiter because of the difficulties encountered while solving the nonlinear problems with the BJK limiter, see \cite{JJ19}.
\item  For adaptive grid refinement and problems with different layers, the \afce technique refines the grid much better than the \afcse technique.
\end{enumerate}

In summary, the  \afcse technique gave better results than the \afce technique with respect to the effectivity index. In contrast, the \afce technique gave better results with adaptive grid refinement. For convection-dominated problems, the BJK limiter gave a better effectivity index as compared to the Kuzmin limiter. Still, difficulties arise in solving the nonlinear problem associated with the BJK limiter for a small diffusion. Future work of the research relates to the estimator's behavior on grids with hanging nodes, development of robust estimators, numerical studies in 3d, and extending the analysis for the local lower bound.
\section{Acknowledgements}
The work of the author has been supported by Berlin Mathematical School (BMS). The author would like to thank Prof. Dr. Volker John for many fruitful discussions and suggestions.
\bibliographystyle{alpha}
\bibliography{afc_apost}
\end{document}